\documentclass[12pt, reqno]{amsart}
\usepackage[utf8]{inputenc}
\usepackage{a4wide,amsfonts,amsfonts, amsmath,amssymb,amsthm,epsfig,cite,graphicx,hyperref,
color, esint,fancyhdr, enumerate, latexsym,amsrefs, makecell, float, blindtext, multicol}
\usepackage{calligra}
\usepackage{enumitem}
\usepackage{tcolorbox}

\newtheorem{thm}{Theorem}[section]
\newtheorem{cor}[thm]{Corollary}

\numberwithin{equation}{section}

\newcommand{\mbb}{\mathbb}
\newcommand{\ra}{\rightarrow}

\newcommand{\sm}{\setminus}
\newcommand{\ep}{\epsilon}
\newcommand{\no}{\noindent}

\newcommand{\Om}{\Omega}
\newcommand{\ti}{\tilde}

\newcommand{\De}{\Delta}

\title{A quasiconformal variant of the union problem}
\author{Diganta Borah, Prachi Mahajan, Kaushal Verma}

\address{DB: Indian Institute of Science Education and Research Pune, Pune  411008, India}
\email{dborah@iiserpune.ac.in}

\address{PM: Department of Mathematics, Indian Institute of Technology Bombay, Powai, Mumbai 400076, India}
\email{prachi.mjn@iitb.ac.in}

\address{KV: Department of Mathematics, Indian Institute of Science, Bengaluru-560012, India}
\email{kverma@iisc.ac.in}

\begin{document}

\begin{abstract}
   The Union Problem, which has its genesis in the classical Levi problem, asks for a classification of complex manifolds $M$ that can be exhausted by an increasing union of submanifolds $M_j \subset M$ which are all biholomorphic to a fixed domain in $\mbb C^n$. We explore a quasiconformal variant of this question and seek to classify $n$-Riemannian manifolds $M$ such that each $M_j$ is quasiconformally equivalent to a bounded domain in $\mbb R^n$. It turns out that this is possible when these quasiconformal equivalences have uniformly bounded dilatations. Using Kiernan's quasiconformal Schwarz lemma when $n=2$ and Ferrand's conformal capacity when $n \ge 3$, we classify a class of $n$-Riemannian manifolds $M$ such that each $M_j$ is $K_j$-quasiconformally equivalent to $\Om \sm A$, where $\sup K_j < \infty$ and $\Om \subset \mbb R^n$ is a $C^2$-smoothly bounded domain and $A \subset \Om$ is at most finite. As a consequence, we obtain that Gehring's example of a bounded domain in $\mbb R^n$ which has $C^1$-smooth boundary everywhere except at a point and is known to be quasiconformally inequivalent to the unit ball in $\mbb R^n$, possesses the additional property that it cannot even be exhausted by quasiconformal images of the unit ball with uniformly bounded dilatations.
\end{abstract}

\subjclass[2020]{Primary: 30C62, 30C65}
\keywords{Union Problem, Quasiconformal mappings, Riemannian manifold}

\maketitle

\section{Introduction}

\no Let $M$ be a complex manifold exhausted by an increasing sequence of open Stein submanifolds $M_j$, i.e., $M_1 \Subset M_2 \ldots \Subset M = \cup M_j$. In this setting, Behnke and Thullen \cite{BT} asked the following question in $1933$: Is $M$ Stein? Behnke--Stein \cite{BS39} provided a positive answer to this question when $M$ is an open subset of $\mbb C^n$, $n \ge 1$ by showing that the union of an increasing sequence of domains of holomorphy is a domain of holomorphy. While being an important ingredient in the solution to the classical Levi problem, its relevance also stems from the general fact that the union of a pair of domains of holomorphy need not always be a domain of holomorphy. On the other hand, Forn\ae ss provided explicit counterexamples (\cite{For77}, \cite{For76}) which established that
the answer to the question of Behnke--Thullen is generally negative. In dimension one, the situation is markedly different; the above question reduces to a purely topological problem due to the following result of Behnke--Stein \cite{BS49} which states that a Riemann surface is Stein if and only if it is non-compact. This provides an affirmative answer to the above question in dimension one. The counterexamples due to Forn\ae ss were the reason behind the study of the following variant of this problem: Let $M$ be a complex manifold which is exhausted by an increasing sequence of open subsets $M_j$, $j \ge 1$, each of which is biholomorphic to a given domain $\Om \subset \mbb C^n$. Determine $M$ in terms of $\Om$. This problem is sometimes called the Union Problem in the literature. 

\medskip

We are interested in exploring a quasiconformal variant of this question. More precisely, if $M$ is an $n$-dimensional Riemannian manifold that is exhausted by an increasing sequence of submanifolds $M_j$, $j \ge 1$, and each $M_j$ is quasiconformally equivalent to a fixed $n$-dimensional Riemannian manifold $\Om$, can $M$ be described in terms of $\Om$? This is indeed the case if the dilatations of the quasiconformal equivalences between $\Om$ and $M_j$ are uniformly bounded. A stronger conclusion is obtained if these dilatations converge to $1$. The main results for cases $n = 2$ and $n \ge 3$ are contained in Section $2$ and Section $3$, respectively. Finally, it must be noticed that the techniques used in Section $3$ are also applicable in Section $2$ and the different choice that has been made is entirely to highlight the diversity of techniques that are available to address such questions.

\medskip

Part of the reason for considering this quasiconformal variant of the Union Problem stems from recalling a well known phenomenon, namely, that the union of an increasing sequence of quasidisks is not always a quasidisk. Indeed, if the quasidisks have uniformly bounded dilatation, the limit will be a quasidisk, but when the dilatations grow unboundedly, the limit domain could have a non-Jordan boundary or develop inward cusps, in which case, the Ahlfors three-point condition would be violated, and this would prevent the limit from being a quasidisk.

\section{Case $n=2$}

In this case $M$ is a Riemann surface that is exhausted by quasiconformal images of a fixed domain. Two ingredients are needed to analyse the possibilities for $M$. 

\medskip

First, let $\mathcal{F}$ be a family of 
$K$-quasiconformal maps of $M$ into $\mbb P^1$ equipped with the spherical metric. If there exists a $d>0$ such that each $f \in \mathcal{F}$ omits two points in $\mbb P^1$ whose spherical distance is at least $d$, then the family $\mathcal{F}$ is normal. Indeed, note that Lemma~5.1 and consequently Lemma~5.2 of \cite{LV} hold when the domain $G$ is replaced by a Riemann surface $M$. Thus, we only need to establish that the family $\mathcal{F}$ is equicontinuous on $M$. This can be shown by rewriting the proof of Theorem~4.1 of \cite{LV} on a coordinate disc centered at $z_0 \in M$. Thus, an analogue of Theorem~5.1-(1) in \cite{LV} holds for families of quasiconformal maps with uniformly bounded dilatations from a Riemann surface to $\mbb P^1$ equipped with the spherical metric.

\medskip

Second, let $d_M$ denote the Kobayashi pseudo-distance on $M$. If $M$ is hyperbolic, then $d_M$ is a distance, but if $M$ is non-hyperbolic, then $d_M \equiv 0$. It is well known that holomorphic maps are distance decreasing with respect to the Kobayashi distance. We recall an analogue of this for quasiconformal maps from Kiernan \cite{K}. For each $K \geq 1$ and $0<\delta \leq 32^{-K}$, a pseudodistance $h_{M,K}=h_{M,K,\delta}$ was introduced in \cite{K} as follows: first note that by Theorem~2 in \cite{K}, there exists a constant $C_{K,\delta}>0$ such that for each holomorphic self-map $f: \Delta \to \Delta$ of the unit disc $\Delta \subset \mbb C$,
\[
d_{\Delta}\big(f(x),f(y)\big) \leq
\begin{cases}
C_{K,\delta}\big(d_{\Delta}(x,y)\big)^{1/K},  &  \text{if } d_{\Delta}(x,y)\leq \delta\\
C_{K,\delta}d_{\Delta}(x,y), & \text{if } d_{\Delta}(x,y) > \delta,
\end{cases}
\]
for all $x,y \in \Delta$. Set
\[
h_{M,K,\delta}(z_1,z_2)=
\begin{cases}
C_{K,\delta}d_M(z_1,z_2), & \text{if $d_M(x,y)\geq 1$}\\
C_{K,\delta}\big(d_M(z_1,z_2)\big)^{1/K}, & \text{if $d_M(x,y)< 1$}.
\end{cases}
\]
Then $h_{M,K,\delta}$ is a pseudo-distance on $M$ that depends on $K,\delta$ and $C_{K,\delta}$. Observe that $h_{M,K,\delta}\equiv 0$ if $d_M\equiv 0$ and $h_{M,K,\delta}$ is a distance otherwise. When $\delta$ and $C_{K,\delta}$ are fixed, $h_{M,K,\delta}$ will be denoted by $h_{M,K}$. In \cite{K} it was shown that if 
$f: M \to M'$ is a $K$-quasiconformal map between Riemann surfaces $M, M'$, Then $f$ is 
distance decreasing with respect to the pseudo-distances $h_{M,K}$ and $d_{M'}$, i.e.,
\[
d_{M'}\big(f(x),f(y)\big) \leq h_{M,K}(x,y)
\]
for all $x,y \in M$.

\medskip

\noindent All theorems that follow require only a uniform bound on the dilatations $K_j$, but the statements of all theorems below indicate the evidently clear consequence that $K_j \ra K_0$ after possibly passing to a subsequence.

\begin{thm} \label{T1}
Let $ M $ be a hyperbolic Riemann surface exhausted by an increasing sequence of open subsets $M_j \Subset M_{j+1}$, $j \ge 1$, and let $\Omega \subset \mbb C$ be a bounded domain with $C^2$-smooth boundary. Assume that there exists a $K_j$-quasiconformal homeomorphism $\phi_j: \Omega \rightarrow M_j$ for each $j \ge 1$. If $K_j \to K_0 < \infty$, then $M$ is $K_0$-quasiconformally equivalent to either $\Omega$ or the unit disc. In particular, if $ K_j \rightarrow 1 $, then  $ M $ is biholomorphic either to $ \Omega $ or to the unit disc $ \Delta $.
\end{thm}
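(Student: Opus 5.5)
Fix a base point $p \in M_1$ and set $a_j = \phi_j^{-1}(p) \in \Omega$. After passing to a subsequence, either $a_j \to a \in \Omega$ or $a_j \to a_0 \in \partial\Omega$. These two cases should produce the two alternatives $\Omega$ and $\Delta$.

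\emph{Interior case ($a_j \to a \in \Omega$).} I would work with the inverses $\psi_j = \phi_j^{-1} : M_j \to \Omega \subset \mathbb{C}$. They are $K_j$-quasiconformal with $K_j \le K$ uniformly, and they omit a fixed pair of points of $\mathbb{P}^1$ (for instance $\infty$ and a point outside $\overline{\Omega}$). On each compact subset of $M$, eventually contained in $M_j$, the normality criterion stated above applies. A diagonal argument then gives $\psi_j \to \psi$ locally uniformly on $M$, with $\psi(p) = a \in \Omega$.

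By the standard limit theorem for $K$-quasiconformal maps, $\psi$ is either constant or $K_0$-quasiconformal. This uses that the dilatation bounds $K_j$ tend to $K_0$; the relevant results hold locally in coordinate discs. To rule out the constant case, I would use the inverses. The maps $\phi_j : \Omega \to M_j \subset M$ take values in a hyperbolic surface. By Kiernan's inequality they satisfy
\[
d_M\big(\phi_j(x), \phi_j(y)\big) \le h_{\Omega,K}(x,y),
\]
so they form an equicontinuous family. Since $\phi_j(a_j) = p$ and $a_j \to a$, the $\phi_j$ are locally uniformly bounded near $a$, and hence subconverge to some $\phi : \Omega \to M$. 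Passing to the limit in $\psi_j \circ \phi_j = \mathrm{id}$ on compacts near $a$ gives $\psi \circ \phi = \mathrm{id}$ there. So $\psi$ is nonconstant, hence a quasiconformal embedding.

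Applying the same identity $\phi_j \circ \psi_j = \mathrm{id}$ on compacts of $M$ shows that $\psi(M) = \Omega$. Thus $M$ is $K_0$-quasiconformally equivalent to $\Omega$.

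\emph{Boundary case ($a_j \to a_0 \in \partial\Omega$).} Here I would use the $C^2$ smoothness of $\partial\Omega$. Take an internally tangent disc $B \subset \Omega$ at $a_0$ and an externally tangent disc at $a_0$. Choose the Riemann maps $g_j : \Omega \to \Delta$ normalized so that $g_j(a_j) = 0$. Because of the inner and outer tangent discs, these rescalings make $\Omega$, seen from $a_j$, look like a half-plane. I expect the maps $g_j \circ \psi_j$ on the $M_j$ to behave like maps into a disc whose base point is fixed at $0$.

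Concretely, I would show the following:
\begin{enumerate}
\item The maps $g_j \circ \psi_j : M_j \to \Delta$ are $K_j$-quasiconformal and send $p$ to $0$. By normality they subconverge to $\Psi : M \to \overline{\Delta}$ with $\Psi(p) = 0$.
\item The inverses $\phi_j \circ g_j^{-1} : \Delta \to M$ are equicontinuous by Kiernan's inequality, with respect to $h_{\Delta,K}$ and $d_M$. They send $0$ to $p$, so they subconverge to some $\Phi : \Delta \to M$.
\end{enumerate}
As in the interior case, the relations $\Psi \circ \Phi = \mathrm{id}$ and $\Phi \circ \Psi = \mathrm{id}$ then give a $K_0$-quasiconformal equivalence between $M$ and $\Delta$.

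The main obstacle is surjectivity together with the condition that limits do not escape. For item 2, the limit of $\phi_j \circ g_j^{-1}$ must stay inside $M$ rather than diverging. Kiernan's estimate, taken together with $\phi_j(a_j) = p$, keeps the images of compact sets at bounded $d_M$-distance from $p$. Since $M$ is hyperbolic, $d_M$-balls are relatively compact, which gives local boundedness. Note that this boundary-case step becomes trivial once the Riemann maps are used; the $C^2$ hypothesis really matters only in separating the two cases cleanly and in the interior case, where it controls the identification of $\psi(M)$ with all of $\Omega$.

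Finally, if $K_j \to 1$, then $K_0 = 1$. A $1$-quasiconformal map is conformal, so $M$ is biholomorphic to $\Omega$ or to $\Delta$.
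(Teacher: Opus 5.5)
Your interior case is sound, and it takes a different route from the paper's. You get normality of $\{\phi_j\}$ directly from Kiernan's inequality $d_M(\phi_j(x),\phi_j(y))\le h_{\Omega,K}(x,y)$ plus completeness of $d_M$ on a hyperbolic Riemann surface (Arzel\`a--Ascoli), and then nonconstancy of $\psi$ from $\psi\circ\phi=\mathrm{id}_\Omega$. The paper instead first proves $\psi$ injective by letting $j\to\infty$ in $d_M(z_1,z_2)\le h_{\Omega,K}(\psi_j(z_1),\psi_j(z_2))$, and only afterwards gets normality of $\psi\circ\phi_j$. One small fix: the inclusion $\psi(M)\subseteq\Omega$ must come from openness of $\psi(M)$ and $\mathrm{int}\,\overline{\Omega}=\Omega$. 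It cannot come from $\phi_j\circ\psi_j=\mathrm{id}$, which presupposes it.

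The genuine gap is the boundary case. Riemann maps $g_j:\Omega\to\Delta$ exist only when $\Omega$ is simply connected, but the theorem allows any bounded $C^2$ domain, e.g.\ an annulus. Worse, when $\Omega$ is simply connected it is already conformally equivalent to $\Delta$, so the dichotomy ``$\Omega$ or $\Delta$'' collapses and your boundary argument covers only the trivial instance. The boundary alternative has content precisely for multiply connected $\Omega$, where your construction is unavailable. For instance, $\Delta$ is an increasing union of domains $\{|z|<\rho_j\}\setminus\overline{D_j}$ with small discs $D_j$ escaping to $\partial\Delta$, all conformal to one fixed annulus. The tangent discs you introduce are never used. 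Your remark about where $C^2$ matters is also backwards: the interior case needs only $\mathrm{int}\,\overline{\Omega}=\Omega$, while the paper uses $C^2$ exactly in the boundary case.

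The missing idea is a rescaling that does not need simple connectivity. With $r$ a local $C^2$ defining function at $a_0$, the affine maps $A_j(z)=(z-a_j)/(-r(a_j))$ send $\Omega$ to domains $\Omega_j$ converging in the local Hausdorff sense to a half-plane $\Omega_\infty$. Then $\tilde\psi_j=A_j\circ\psi_j$ eventually omit two fixed points, so they are normal with $\tilde\psi_j(p)=0$. The paper proves injectivity of the limit from Kiernan's inequality combined with the stability estimate $\limsup_j d_{\Omega_j}\le d_{\Omega_\infty}$. Your own Arzel\`a--Ascoli scheme would also work after this rescaling. Every subdomain $V\Subset\Omega_\infty$ containing $0$ lies in $\Omega_j$ for large $j$, and Kiernan's inequality for the restriction of $\phi_j\circ A_j^{-1}$ to $V$ gives $d_M\le h_{V,K}$ uniformly in $j$. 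This produces $\tilde\phi:\Omega_\infty\to M$ with $\tilde\psi\circ\tilde\phi=\mathrm{id}$, hence, by openness, $\tilde\psi(M)=\Omega_\infty\cong\Delta$. As written, however, the proposal does not establish the boundary case.
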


\begin{proof}
The dichotomy that $M$ is quasiconformally equivalent to either $\Omega$ or $\Delta$ arises from the following. Set
\[
\psi_j := \phi_j^{-1}: M_j \rightarrow \Omega.
\]
Then $\psi_j$ is a $K_j$-quasiconformal homeomorphism of $M_j$ onto $\Omega$. Fix $ z_0 \in M $. Since $M_j \Subset M_{j+1}$ is an exhaustion of $M$, we may assume that $ z_0 \in M_j $ for all $ j$. There are two cases to be considered:
\begin{enumerate}
 \item[(a)] $\{\psi_j(z_0)\} $ is a relatively compact subset of $\Omega$.
 \item[(b)] $\{\psi_j(z_0)\} $ has at least one limit point in $\partial \Omega $.
\end{enumerate}
In Case (a), $M$ will turn out to be $K_0$-quasiconformally equivalent to $\Omega$, while in case (b), it will be $K_0$-quasiconformally equivalent to 
$\Delta$. Before proceeding further, note that $\{K_j\}$ being a convergent sequence is bounded, say by $K$ and it is immediate that $\psi_j$ is $K$-quasiconformal for all $j \ge 1$.

\medskip

\noindent {\sf Case (a)}: There are several steps in the proof.

\medskip

\noindent {\it Step I}. A subsequence of $\{\psi_j: j \ge 1\}$ converges uniformly on compact subsets of $M$ to a $K_0$-quasiconformal map $\psi: M \to \Omega$.

\medskip

Let $U_n \Subset U_{n+1}$ be an exhaustion of $ M $. For each $ n $, there exists an $ N \ge 1$ such that $U_n \subset M_j$ for every $ j \geq N $. Consequently, for every $j \geq N$, $\psi_j$ is a $K$-quasiconformal map of $U_n$ into $\Omega$. Since $\Omega$ is bounded, 
$ \{\psi_j\vert_{U_n}: j \geq N\} $ forms a normal family. Hence, some subsequence of $ \{\psi_j: j \ge N\}$ converges uniformly on compact subsets of $ U_n $ to $ \psi^{(n)} : U_n \rightarrow \overline{\Omega}$. By passing to the diagonal subsequence, there exists a subsequence (which we continue to denote by the same symbols) of the sequence $\psi_j$ that converges uniformly on compact subsets of $M$ to $ \psi: M \rightarrow \overline{\Omega} $. Moreover, 
\begin{equation*}
 {\psi|}_{U_n}= \psi^{(n)}   
\end{equation*}
for each $n$. 

\medskip

The claim now is that $\psi$ is constant or $K_0$-quasiconformal. To see this, let $\epsilon>0$ be arbitrary. Since $K_j \to K_0$, $\psi_j$ is $(K_0+\epsilon)$-quasiconformal for all large $j$, and hence it follows from Theorem 7.3 of \cite{F1} that either $ \psi^{(n)} $ is constant on $ U_n $ or 
$ \psi^{(n)} $ is a $(K_0 + \epsilon)$ quasiconformal mapping of $ U_n $.

\medskip

If there exists $ n \ge 1$ such that $ \psi^{(n)} $ is constant, then for any $ m \geq n $, the restriction $ \psi^{(m)}\vert_{U_n} $, as it agrees with $ \psi^{(n)} $, must also be constant. This implies that $ \psi $ is constant on $ M $. Otherwise, $ {\psi|}_{U_n} =\psi^{(n)} $ is $ (K_0 + \epsilon) $-quasiconformal for every $ n $, and hence $ \psi $ is $ (K_0 + \epsilon) $-quasiconformal on $M $, since quasiconformality is a local property. Since $\epsilon>0$ is arbitrary, it follows that $\psi$ is $K_0$-quasiconformal. This proves the claim.

\medskip

Now we note that in either of the above possibilities, $\psi$ maps $M$ into $\Omega$. Indeed, if $\psi$ is constant, then $\psi\equiv\psi(z_0)$ and so $\psi: M \to \Omega$. On the other hand, if $\psi$ is quasiconformal, then $\psi:M \to \mbb P^1$ is a homeomorphism onto its image, and hence, by the invariance of domains, it is an open map. Therefore, $\psi(M)$ is an open subset $\mbb P^1$. Since  $\psi(M) \subset \overline\Omega $, we must have $\psi(M) \subset \Omega$ and, therefore, $\psi: M \rightarrow \Omega$.

\medskip

Now we rule out the possibility that $\psi$ is constant by showing that it is injective. Let $0<\delta<32^{-K}$ and $h_{\Omega, K}=h_{\Omega, K, \delta}$ be the pseudo-distance on $\Omega$ defined above. Let $ z_1, z_2 $ be any two points in $ M $. Then for each $j$,
\begin{equation}\label{Kob-iso}
d_M(z_1,z_2) = d_M \big( \phi_j \circ \psi_j (z_1), \phi_j \circ \psi_j (z_2) \big)
\end{equation}
and since $ M_j \subset M $, it follows that
\begin{equation}\label{Kob-dec}
d_M \big( \phi_j \circ \psi_j (z_1), \phi_j \circ \psi_j (z_2) \big) \leq d_{M_j} \big( \phi_j \circ \psi_j (z_1), \phi_j \circ \psi_j (z_2) \big).  
\end{equation}
The $K$-quasiconformal map $ \phi_j : \Omega \rightarrow M_j $ is distance decreasing with respect to the metrics $ h_{\Omega, K} $ and $ d_{M_j} $, and hence
\begin{equation}\label{quasi-dec}
d_{M_j} \big( \phi_j \circ \psi_j (z_1), \phi_j \circ \psi_j (z_2) \big) 
\leq h_{\Omega, K}  \big( \psi_j(z_1), \psi_j(z_2) \big)
\end{equation}
and by using the triangle inequality, we get
\begin{alignat}{3} \label{0.3} 
h_{\Omega,K}  \big( \psi_j(z_1), \psi_j(z_2) \big) 
\leq 
h_{\Omega,K}  \big( \psi_j(z_1), \psi(z_1) \big) + h_{\Omega, K}  \big( \psi(z_1), \psi(z_2) \big)  \nonumber \\
+ \; h_{\Omega, K}  \big( \psi_j(z_2), \psi(z_2) \big). 
\end{alignat}
Combining (\ref{Kob-iso})--(\ref{0.3}) and letting $ j \rightarrow \infty$ gives
\[
d_M(z_1,z_2) \leq h_{\Omega, K}  \big( \psi(z_1), \psi(z_2) \big).
\]
Since $ M $ is hyperbolic, we must have $ z_1 = z_2 $ whenever $ \psi(z_1) = \psi(z_2) $. It follows that $ \psi $ is injective on $ M $. In particular, $ \psi $ is nonconstant, and hence, by the preceding dichotomy, $ \psi $ is a $K_0$-quasiconformal on $ M $.

\medskip

\noindent {\it Step II}. A subsequence of $\{\phi_j: j \ge 1\}$ converges uniformly on compact subsets of $\Omega$ to a $K_0$-quasiconformal map $\phi: \Omega \to M$.

\medskip

Unlike in the previous step, it is not evident that the family $\{\phi_j\}$ is normal since the maps $\phi_j$ take values in $M$. However, the previous step shows that $M$ is quasiconformally equivalent to the bounded domain $M'=\psi(M) \subset \Omega$, and we exploit this fact to establish the normality of $\{\phi_j\}$. Note that
\[
\Phi_j=\psi\circ \phi_j : \Omega \to M'.
\]
are $KK_j$-quasiconformal mappings of the fixed domain 
$ \Omega $ into a bounded domain and, therefore, the family $ \{\Phi_j\} $ is normal. Consequently, there exists a subsequence of $ \{\Phi_j\} $ (which we continue to denote by the same symbols) that converges uniformly on compact subsets of $ \Omega $ to $ \Phi: \Omega \rightarrow \overline{M'} $. By Theorem~7.3 of \cite{F1}, the limit map $ \Phi $ is either constant or $K_0^2$-quasiconformal on $ \Omega $. 

\medskip

The claim now is that $\Phi$ maps $\Omega$ into $M'$. First, observe that $\Phi$ fixes $\psi(z_0)$. Indeed,
\[
\Phi_j\big(\psi_j(z_0)\big)= \psi \circ \phi_j \big(\psi_j(z_0)\big)=\psi(z_0), \quad j \geq 1.
\]
Since $\psi_j(z_0) \to \psi(z_0)$ and $\Phi_j \to \Phi$ uniformly on compact subsets of $\Omega$, it follows that the left hand side converges to $\Phi(\psi(z_0))$ and hence this limit must be equal to $\psi(z_0)$. Thus, the map $\Phi$ is the constant map $\Phi\equiv \psi(z_0)$ or $K_0^2$-quasiconformal. In the first case, evidently $\Phi: \Omega \to M$. In the second case, by invariance of domains, $\Phi: \Omega \to M'$ and this proves the claim.

\medskip

It now follows that $\phi_j=\psi^{-1}\circ \Phi_j$ converges uniformly on compact subsets of $\Omega$ to the map $\phi=\psi^{-1}\circ \Phi: \Omega \to M$. Since $\phi_j$ is $K_j$-quasiconformal, and $K_j \to K_0$, as in Step I, for every $\ep>0$, the map $\phi$ is constant or $K_0+\epsilon$-quasiconformal. It follows that either $\phi$ is constant or $K_0$-quasiconformal.

\medskip

To show that $\phi$ is $K_0$-quasiconformal, observe that $\phi(\psi(z))=z$ for all $z \in M$. Indeed, given $z \in M$, we have $z \in M_j$ for all large $j$, and so
\[
\phi_j\big(\psi_j(z)\big)=z.
\]
As $\psi_j(z) \to \psi(z) \in \Omega$ and $\phi_j \to \phi$ uniformly on compact subsets of $\Omega$, it follows that the left-hand side converges to $\phi(\psi(z))$ and hence this limit must be equal to $z$. This shows that $\phi$ is non-constant and hence is $K_0$-quasiconformal.

\medskip

\noindent {\it Step III}. $\psi$ is a homeomorphism of $M$ onto $\Omega$. 

\medskip

As in Step II, $\psi(\phi(w))=w$ for all $w \in \Omega$. It follows that $\psi: M \to \Omega$ and $\phi: \Omega \to M$ are inverses of each other. This proves that $\psi: M \to \Omega$ is a $K_0$-quasiconformal homeomorphism and this completes Case (a).

\medskip 

\no {\sf Case (b)}: By passing to a subsequence, assume that $p_j=\psi_j(z_0) \to p_0 \in \partial \Omega $. In this case, the $\psi_j$'s converge locally uniformly on $M$ to the constant map $\psi_{\infty} \equiv p_0$. Therefore, every compact set in $M$ is eventually mapped near $p_0$ by $\psi_j$ for $j$ large. Let $r$ be a local $C^2$-smooth defining function for $\Om$ near $p_0$. Scaling $\Om$ near $p_0$ by the affine transformations
\[
A_j(z) = \frac{z - \psi_j(z_0)}{-r(\psi_j(z_0)}, \; z \in \mbb C 
\]
which satisfy $ A_j(\psi_j(z_0)) = 0 $ for all $j $, yields a family of scaled domains $\Om_j = A_j(\Om)$
that converge in the local Hausdorff sense to the half-plane
\[
\Omega_{\infty}  = \{ z \in \mathbb{C}: \Re \big(\partial r (p_0)z \big) < 1/2\}.
\]
To simplify notation, write
\[
\tilde{\psi}_j := A_j \circ \psi_j
\quad \text{and} \quad
\tilde{\phi}_j := \phi_j \circ A_j^{-1}.
\]
Then $ \tilde{\psi}_j : M_j \to \Omega_j $ and $\tilde{\phi}_j : \Omega_j \to M_j $ are $ K_j$-quasiconformal homeomorphisms which are inverses of each other.

\medskip

\noindent {\it Step I}. A subsequence of $ \{ \tilde{\psi}_j: j \ge 1 \} $ converges uniformly on compact subsets of $M$ to a $K_0$-quasiconformal map $\ti \psi: M \to \Omega_{\infty}$.

\medskip

First, observe that $ \{ \tilde{\psi}_j: j \ge 1 \} $ is a normal family. Indeed, let $U_n \Subset U_{n+1} $ be the exhaustion of $ M $ as before. Choosing distinct points $ a,b \in \mathbb{C} \setminus \overline{\Omega}_{\infty} $, we have $ a,b
\notin \Omega_j $ for all sufficiently large $ j $. Hence, for each $ n \ge 1$, there exists $ N(n) $ such that $ \tilde{\psi}_j $ is defined on $ U_n $ for all $ j \geq N(n) $, and
\[
\tilde{\psi}_j(U_n) \subset \Omega_j \subset \mathbb{C} \setminus \{a,b\}.
\]
The family $ \{ \tilde{\psi}_j: j \geq N(n)\}$, is therefore normal on $ U_n $, and admits a subsequence that converges uniformly on compact subsets of $ U_n $.  As established earlier in the proof, a diagonal argument yields a subsequence (which we continue to denote by the same symbols) of $ \{\tilde{\psi}_j \} $ uniformly converging on compact subsets of $ M $ to a limit map $\tilde \psi:M \to {\overline \Omega}_{\infty}$. Moreover, the limit map $ \tilde\psi $ is either constant or $K_0$-quasiconformal on $ M $. 

\medskip

Since $\tilde{\psi}_j(z_0)=0 $ for every $ j$, we have $ \tilde{\psi}(z_0)=0 $. Therefore, if $\tilde{\psi} $ is constant, then $\tilde{\psi}\equiv 0$. Otherwise, $ \tilde{\psi} $ is quasiconformal and hence by the invariance of domains, $ \tilde{\psi}(M)$ is an open subset of $\mbb P^1$. Since $ \tilde{\psi}(M)\subset {\overline{\Omega}}_{\infty}$, it follows that $ \tilde{\psi}(M)\subset \Omega_{\infty} $. Thus, in either case, $ \tilde{\psi}: M \to \Omega_{\infty} $. 

\medskip

We now show that $ \tilde{\psi} $ is injective. The argument is analogous to that used in Case (a), the only additional ingredient being the following stability property 
\begin{equation} \label{1.4}
\limsup_{j\to\infty}
h_{\Omega_j,K}\bigl(\tilde{\psi}(z_1),\tilde{\psi}(z_2)\bigr)
\leq
h_{\Omega_{\infty},K}\bigl(\tilde{\psi}(z_1),\tilde{\psi}(z_2)\bigr),
\end{equation}
for all $ z_1,z_2 \in M $. The above estimate follows directly from the definition of $ h_{\Omega_j, K} $ and the corresponding stability result for the Kobayashi distance on $ \tilde{\psi}(M) $, namely
\[
\limsup_{j\to\infty} d_{\Omega_j}\bigl(\tilde{\psi}(z_1),\tilde{\psi}(z_2)\bigr)
\leq
d_{\Omega_{\infty}}\bigl(\tilde{\psi}(z_1),\tilde{\psi}(z_2)\bigr).
\]
See, for instance, Proposition 3.1 in \cite{BBMV}. In view of the stability property (\ref{1.4}), the argument of Case (a) adapts readily to show that $ \tilde{\psi} $ is injective. Indeed, 
\begin{equation}
d_M(z_1,z_2) = d_M \big( \tilde\phi_j \circ \tilde\psi_j (z_1), \tilde\phi_j \circ \tilde\psi_j (z_2) \big). \label{1.1}
\end{equation}
for $ z_1, z_2 \in M $ and $ j \ge 1$ and it follows from the distance non-increasing property of the Kobayashi distance under the inclusion $ M_j \hookrightarrow M $ that
\begin{equation*}
d_M \big( \tilde\phi_j \circ \tilde\psi_j (z_1), \tilde\phi_j \tilde\circ \psi_j (z_2) \big) \leq d_{M_j} \big( \tilde\phi_j \circ \tilde\psi_j (z_1), \tilde\phi_j \circ \tilde\psi_j (z_2) \big).  \end{equation*}
Since the $K $-quasiconformal mappings $ \tilde\phi_j : \Omega_j \rightarrow M_j $ are distance non-increasing with respect to the metrics $ h_{\Omega_j, K} $ and $ d_{M_j} $,
\begin{equation}
d_{M_j} \big( \tilde\phi_j \circ \tilde\psi_j (z_1), \tilde\phi_j \circ \tilde\psi_j (z_2) \big) 
\leq h_{\Omega_j, K}  \big( \tilde\psi_j(z_1), \tilde\psi_j(z_2) \big).\label{1.2}
\end{equation}
Applying the triangle inequality yields the following inequality:
\begin{alignat}{3} \label{1.3} 
h_{\Omega_j, K}  \big( \tilde\psi_j(z_1), \tilde\psi_j(z_2) \big) 
\leq 
h_{\Omega_j, K}  \big( \tilde\psi_j(z_1), \tilde\psi(z_1) \big) + h_{\Omega_j, K}  \big( \tilde\psi(z_1), \tilde\psi(z_2) \big)  \nonumber \\
+ \; h_{\Omega_j, K}  \big( \tilde\psi_j(z_2), \tilde\psi(z_2) \big).
\end{alignat}
Observe that 
\[
d_{\Omega_j}\bigl(\tilde{\psi}_j(z_i),\tilde{\psi}(z_i)\bigr)\to 0,
\qquad i=1,2.
\]
as $ j \rightarrow \infty $ (see, for instance, Lemma 4.3 of \cite{MV}). Hence, by the definition of $ h_{\Omega_j,K} $, it follows that 
\[
h_{\Omega_j,K}
\bigl(\tilde{\psi}_j(z_i),\tilde{\psi}(z_i)\bigr)
\to 0,
\qquad i=1,2.
\]
Combining the above observation with the inequalities (\ref{1.1}), (\ref{1.2}) and (\ref{1.3}) and letting $ j \rightarrow \infty$ gives
\[
d_M(z_1,z_2) \leq h_{\Omega_{\infty}, K}  \big( \tilde\psi(z_1), \tilde\psi(z_2) \big).
\]
Since $ M $ is hyperbolic, $ z_1 = z_2 $ is immediate whenever $ \tilde\psi(z_1) = \tilde\psi(z_2) $. It follows that $ \tilde\psi $ is injective in $ M $, thereby ruling out the possibility that $\tilde{\psi} $ is constant. It follows that $\tilde{\psi}: M \rightarrow \Omega_{\infty} $ is a quasiconformal map. 

\medskip

\noindent {\it Step II}. A subsequence of $\{\tilde \phi_j: j \ge 1\}$ converges uniformly on compact subsets of $\Omega_{\infty}$ to a $K_0$-quasiconformal map $\tilde \phi: \Omega_{\infty} \to M$.

\medskip

To establish the normality of the family $\{\tilde \phi_j\}$, we use the fact that $M$ is quasiconformally equivalent to a domain $\tilde M=\tilde \psi(M) \subset \Omega_{\infty}$. Note that
\[
\tilde \Phi_j = \tilde \psi \circ \tilde \phi_j: \Omega_j \to \tilde M.
\]
is a $K_0K_j$-quasiconformal map of $\Omega$ into a domain $\tilde M$ whose complement in $\mbb P^1$ has more than two points.  Moreover, since $\Omega_j\to\Omega_{\infty} $ in the local Hausdorff sense, every compact subset of $ \Omega_{\infty} $ is contained in $ \Omega_j $ for all sufficiently large $ j $. Hence, $ \tilde{\phi}_j $ are eventually defined on every compact subset of $\Omega_{\infty} $. Now, considering an exhaustion of $\Omega_{\infty}$ by relatively compact open subsets, and proceeding as in Step I of Case (a), we obtain a subsequence of $\{\tilde \Phi_j\}$, which we denote by itself, that converges uniformly on compact subsets of $\Omega_{\infty}$ to a map $\tilde \Phi: \Omega_{\infty} \to \overline{\tilde M}$. which is either a constant map or a quasiconformal map. Also, note that
\[
\tilde \Phi_j(0)=\tilde \psi \circ \tilde \phi_j(0)= \tilde \psi(z_0)
\]
and hence $\tilde \Phi(0)=\tilde \psi(z_0) \in \ti M$. Thus, whether $\tilde \Phi$ is constant or quasiconformal, we always have $\tilde \Phi:\Omega_{\infty} \to \tilde M$. It now follows that the maps $\tilde \phi_j=\tilde \psi^{-1} \circ \tilde \Phi_j$ converge uniformly on compact subsets of $\Omega_{\infty}$ to the map $\tilde \phi=\tilde \psi^{-1}\circ \Phi: \Omega_{\infty} \to M$. Since $\tilde \phi_j$ is $K_j$-quasiconformal and $K_j \to K_0$, as in Step I of Case (a), for every $\ep>0$, the map $\tilde \phi$ is constant or $(K_0+\epsilon)$-quasiconformal. It follows that either $\tilde \phi$ is constant or $K_0$-quasiconformal.

\medskip

Next, we show that $ \tilde{\phi} $ is non-constant. Observe that $\tilde\phi(\tilde\psi(z))=z$ for all $z \in M$. Indeed, given $z \in M$, we have $z \in M_j$ for all large $j$, and so
\[
\tilde\phi_j\big(\tilde\psi_j(z)\big)=z.
\]
As $\tilde\psi_j(z) \to \tilde\psi(z) \in \Omega_{\infty}$ and $\tilde \phi_j \to \tilde\phi$ uniformly on compact subsets of $\Omega_{\infty}$, it follows that the left-hand side converges to $\tilde\phi(\tilde\psi(z))$ and hence this limit must be equal to $z$. This shows that $\tilde\phi$ is non-constant and hence is $K_0$-quasiconformal.

\medskip

\noindent {\it Step III}. $ \tilde \psi: M \ra \Om_{\infty} $ is a $K_0$-quasiconformal homeomorphism.

\medskip

As in Step II, $\tilde\psi(\tilde\phi(w))=w$ for all $w \in \Omega_{\infty}$. It follows that $\tilde\psi: M \to \Omega_{\infty}$ and $\tilde\phi: \Omega_{\infty} \to M$ are inverses of each other. This shows that $\tilde\psi: M \to \Omega_{\infty}$ is a $K_0$-quasiconformal homeomorphism. Since $\Omega_{\infty}$ is conformally equivalent to $\Delta$, this completes Case (b).

\medskip

Finally, the particular case follows immediately if we take $K_0=1$, as any $1$-quasi-conformal map is conformal.
\end{proof}

It remains to consider the case when $ M $ is non-hyperbolic. First, note that the union $ M $ is necessarily non-compact, as each $ M_j $ in the exhaustion is non-compact. In this case, the universal covering surface of $ M $ is $\mathbb{C} $. The only Riemann surfaces with universal covering surface $ \mathbb{C} $ are $\mathbb{C}$, $ \mathbb{C} \setminus \{0\} $, and complex tori. The latter are compact, whereas $ M $ is non-compact. Hence, if $ M $ is a non-hyperbolic Riemann surface, then $ M $ is biholomorphic to $ \mathbb{C} $ or  $ \mathbb{C} \setminus \{0\} $.

\medskip

We now show that the description of $M$ in the Union Problem remains unchanged if finitely many points are removed from $\Om$.

\begin{thm} \label{iso}
Assume that in the Union Problem, $ M $ is a hyperbolic Riemann surface and $ \Omega = D \setminus A $, where $ D $ is a bounded domain in $ \mathbb{C} $ with $C^2$-smooth boundary and $ A \subset D $ is finite. Then $ M $ is $K$-quasiconformally equivalent to $ \Omega $ or to the unit disc $ \Delta $.
\end{thm}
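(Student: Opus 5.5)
We run the proof of Theorem~\ref{T1} again with $\Omega = D\setminus A$. We keep $\psi_j=\phi_j^{-1}:M_j\to\Omega$, which are $K_j$-quasiconformal with $K_j\le K$, fix $z_0\in M$, and sort the possible limit points of $p_j=\psi_j(z_0)$ into three cases:
\begin{enumerate}
\item[(a)] $\{p_j\}$ is relatively compact in $\Omega$;
\item[(b)] a subsequence of $\{p_j\}$ converges to a point of $\partial D$;
\item[(c)] a subsequence of $\{p_j\}$ converges to a point $a\in A$.
\end{enumerate}
Cases (a) and (b) go through essentially verbatim. In Case (a), normality of $\{\psi_j\}$ only needs $\Omega$ to be bounded. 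Injectivity of the limit uses Kiernan's pseudo-distance $h_{\Omega,K}$, which is a genuine distance because $\Omega$ is hyperbolic. The argument with $\Phi_j=\psi\circ\phi_j$ and invertibility also carries over unchanged. One point needs checking: the limit $\psi$ maps into $\overline{\Omega}=\overline D$, and we must exclude values in $A$. If $\psi$ is constant, it equals $\psi(z_0)\in\Omega$. If $\psi$ is quasiconformal, then $\psi(M)$ is open. To show $\psi(M)\cap A=\emptyset$, we use Hurwitz-type reasoning for quasiconformal maps: if $\psi(z_1)=a$, then $\psi_j$ maps a small circle around $z_1$ to a curve winding once around $a$, since degree is stable under uniform convergence. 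Hence $a\in\psi_j(M_j)=\Omega$ for large $j$, which is a contradiction.

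In Case (b) the scaling near $p_0\in\partial D$ uses the $C^2$ defining function of $D$. Since $A$ is a finite set in the interior, $\operatorname{dist}(A,\partial D)>0$, and after scaling by $1/|r(p_j)|\to\infty$ the images $A_j(A)$ escape to infinity. So $\Omega_j=A_j(D\setminus A)$ still converges in the local Hausdorff sense to the half-plane $\Omega_\infty$. The stability estimate \eqref{1.4} and the convergence $d_{\Omega_j}(\tilde\psi_j(z_i),\tilde\psi(z_i))\to 0$ are local statements about compact subsets of $\Omega_\infty$, so they should persist. We would verify them using localization of the Kobayashi distance, which is unaffected by punctures far away. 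The conclusion is that $M$ is $K_0$-quasiconformally equivalent to $\Delta$.

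The new situation, and the main obstacle, is Case (c), where $p_j\to a\in A$. Here we intend to show that this case cannot occur for hyperbolic $M$. The limit of $\psi_j$ should be the constant $a$, because $h_{\Omega,K}$-balls near a puncture degenerate. We would first try the following direct argument. Choose $z_1\ne z_0$. The Kobayashi distance of $D\setminus A$ near the puncture $a$ behaves like that of the punctured disc, so any two points close to $a$ lying in a thin annulus around $a$ have small distance only up to the angular term. A cleaner route is to compose with a local coordinate around $a$ and use the conformal capacity/modulus of the ring: $\psi_j^{-1}=\phi_j$ maps a small punctured neighbourhood $\{0<|w-a|<\epsilon\}$ into $M$, and the ring domains $\{\epsilon_j<|w-a|<\epsilon\}$ with $\epsilon_j\to0$ have modulus tending to infinity. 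Their $K$-quasiconformal images in $M$ separate $z_0$ from a fixed compact set, so $M$ would contain rings of arbitrarily large modulus around $z_0$ shrinking to $z_0$. Combined with the exhaustion, this should force $\phi_j$ to extend across $a$, making $z_0$ a removable puncture of the limit and contradicting the distance bound $d_M(z_0,z_1)\le h_{\Omega,K}(p_j,\psi_j(z_1))$.

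If this contradiction fails, the fallback is to show directly that the limit of $\psi_j$ is a quasiconformal map onto $D\setminus(A\setminus\{a\})$. This would contradict the injectivity and openness arguments, because the $\psi_j$ omit $a$ while the limit attains it only at $z_0$; using the degree argument above, this is impossible. In either version, the key estimate is that Kiernan's $h_{\Omega,K}$ keeps $d_M$-separated points from collapsing onto the puncture. Making that precise near $a$ is the step I expect to require the most care.
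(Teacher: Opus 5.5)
\textbf{There is a genuine gap, and it sits in Case (c), the only new content of the theorem.} Your Cases (a) and (b) are fine. The degree argument showing $\psi(M)\cap A=\emptyset$ is a legitimate way to settle a point the paper covers only implicitly, since $z_0$ is arbitrary in its exclusion of accumulation at $A$.

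In Case (c) the limit of $\psi_j$ satisfies $\psi(z_0)=a$, so it is either the constant $a$ or quasiconformal. Your degree argument disposes only of the second alternative. Your ``fallback'' therefore presupposes exactly what must be proved, namely that the limit is nonconstant. None of your other arguments rules out $\psi\equiv a$:
\begin{itemize}
\item The injectivity argument of Case (a) does not apply, because $a\notin\Omega$ and the terms $h_{\Omega,K}(\psi_j(z_i),\psi(z_i))$ make no sense.
\item The bound $d_M(z_0,z_1)\le h_{\Omega,K}(p_j,\psi_j(z_1))$ with a fixed $z_1$ gives nothing. Two points tending to a puncture at different rates stay hyperbolically far apart: in the punctured unit disc the distance between radii $s<t$ is $\log\log(1/s)-\log\log(1/t)$, up to normalisation.
\item The ring argument yields no contradiction. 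Every point of every Riemann surface is surrounded by rings of arbitrarily large modulus shrinking to it, and ``this should force $\phi_j$ to extend across $a$'' is never substantiated.
\end{itemize}
Your remark about the ``angular term'' also has the geometry backwards. What degenerates near a puncture is precisely the angular direction.

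\textbf{The missing idea is to exploit that degeneration.} Let $\sigma_j$ be the circle centred at $a$ through $p_j=\psi_j(z_0)$. Its hyperbolic length in a punctured disc $\Delta'(a,r)\subset\Omega$ is $2\pi/\log(r/|p_j-a|)\to 0$, up to normalisation. Kiernan's distance-decreasing property for $\phi_j|_{\Delta'(a,r)}$ then gives
\[ \sup_{w\in\sigma_j} d_M(\phi_j(w),z_0)\le \sup_{w\in\sigma_j} h_{\Delta'(a,r),K}(w,p_j)\to 0. \]
Since $d_M$ induces the topology of the hyperbolic surface $M$, for large $j$ the curve $\phi_j(\sigma_j)$ lies in a small coordinate disc $U\ni z_0$ with $\overline U\subset M_j$.

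This is incompatible with $\phi_j$ being a homeomorphism of $\Omega$ onto $M_j$:
\begin{itemize}
\item The Jordan curve $\phi_j(\sigma_j)$ bounds a compact disc $E\subset U$.
\item The interior of $E$ is a component of $M_j\setminus\phi_j(\sigma_j)$, so it is the image of a component of $\Omega\setminus\sigma_j$.
\item Neither component of $\Omega\setminus\sigma_j$ has compact closure in $\Omega$: the inner one accumulates at $a$, the outer one at $\partial D$. Hence neither image can have compact closure $E$.
\end{itemize}
The paper reaches the same contradiction differently. It notes that $(\phi_j)_*$ is trivial on $\pi_1(\Delta'(a,r))$ and lifts $\phi_j|_{\Delta'(a,r)}$ to the universal cover $\Delta$ of $M_j$. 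It removes the isolated singularity of the bounded lift and obtains an extension of $\phi_j$ to $\Omega\cup\{a\}$ that is a local homeomorphism attaining its value at $a$ a second time in $\Omega$, against injectivity. Without this step, or an equivalent one, Case (c) is not excluded and your proof is incomplete.
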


\begin{proof} Let $ \phi_j : \Omega \rightarrow M_j $ be $ K$-quasiconformal homeomorphisms, let $ \psi_j = \phi_j^{-1} $ denote their inverses. Fix $ z_0 \in M $. We first show that the sequence $ \{ \psi_j(z_0) \} $ cannot have an accumulation point in $ A $. The argument proceeds in several steps. 

\medskip

Assume, on the contrary, that a subsequence of $ \{ \psi_j(z_0) \}  $ (which we continue to denote by the same notation) converges to a point $ p_0 \in A $. Let $ \Delta( p_0, r) $ be a sufficiently small disc centered at the point $ p_0 $ in $ D $ such that $ \Delta( p_0, r) \cap A = \{p_0\} $. We will denote the punctured disc $\De(p_0,r)\setminus \{p_0\}$ by $\De'(p_0,r)$. Let $ \sigma_j$ denote the circle centered at $ p_0 $ with radius $ | \psi_j(z_0) - p_0| $. Since $
\psi_j(z_0) \rightarrow p_0 $, the circles $ \sigma_j $ are contained in $ \Delta'( p_0, r) $ for all sufficiently large $ j $. Our first step is to prove that
\begin{equation} \label{E1}
 \phi_j(\sigma_j) \to z_0   
\end{equation}
by adapting the ideas from the proof of Lemma 2 of \cite{JK}. To establish (\ref{E1}),  note that, by restricting $ \phi_j $ to $ \Delta'( p_0, r) $, we obtain a $K$-quasiconformal mapping $ \phi_j : \Delta'( p_0, r) \rightarrow M_j \subset M $. This restricted mapping is distance decreasing with respect to the metrics $ h_{\Delta'( p_0, r), K} $ and $ d_{M} $. Thus, for every $ w \in \sigma_j $, 
\begin{equation} \label{E2}
d_{M} \big( \phi_j (w), \phi_j \left( \psi_j (z_0) \right) \big) 
\leq h_{\Delta'( p_0, r) , K}  \big( w, \psi_j(z_0) \big).
\end{equation}
Before going further, observe that since $ w$ and $\psi_j(z_0) $ lie on the circle $\sigma_j $, 
\begin{equation*}
d_{\Delta'( p_0, r)}  \big( w, \psi_j(z_0) \big)  \leq \ell_{hyp} (\sigma_j),   
\end{equation*}
where $ \ell_{hyp} (\sigma_j) $ denotes the hyperbolic length of $ \sigma_j $ in $ \Delta'( p_0, r) $. The explicit expression for the hyperbolic metric on the punctured disc forces $ \ell_{hyp} (\sigma_j) \to 0 $, which, in turn, implies that $ d_{\Delta'( p_0, r) }  \big( w, \psi_j(z_0) \big) \to 0 $ and consequently that 
$$ 
h_{\Delta'( p_0, r) , K}  \big( w, \psi_j(z_0) \big) \to 0. 
$$
Combining the above observation with (\ref{E2}) and using the identity $ \phi_j(\psi_j(z_0) ) = z_0 $, it follows that
\[
d_{M} \big( \phi_j (w), z_0 \big) \rightarrow 0.
\]  
Moreover, since the metric $ d_M $ induces the topology of the hyperbolic Riemann surface $ M $, we conclude from above that  $ \phi_j (w) \rightarrow z_0 $, thus establishing (\ref{E1}).

\medskip

The goal now is to show that $ \phi_j $ extends across the isolated boundary point $ p_0 $. To this end,  let $ U $ be a relatively compact simply connected neighbourhood of $ z_0 $ in $ M $. By  (\ref{E1}), 
\[
\phi_j(\sigma_j) \subset U \subset M_j
\]
for all sufficiently large $ j $. Since $ U $ is simply connected, the closed curve $ \phi_j(\sigma_j) $ is 
null homotopic in $ M_j $.
Since $ \sigma_j $ generates $ \pi_1 \big( \Delta'( p_0, r) \big) $, it follows that the induced homomorphism
\[
(\phi_j)_* : \pi_1 \big( \Delta'( p_0, r) \big) \rightarrow \pi_1 (M_j)
\]
is trivial for all sufficiently large $ j $. Thus, after discarding finitely many terms, we may assume that $ (\phi_j)_* $ is trivial for every $ j $.

\medskip

Since each $ M_j $ is an open subset of the hyperbolic Riemann surface $ M $, it is itself hyperbolic. Hence, there exists a holomorphic covering projection $ \pi_j : \Delta \rightarrow M_j $. The triviality of $ (\phi_j)_* $ ensures that $ \phi_j $ lifts to a map 
\[ \tilde{\phi_j} : \Delta'( p_0, r)  \rightarrow \Delta, 
\]
satisfying $ \pi_j \circ \tilde{\phi_j} = \phi_j $. Moreover, the lift $ \tilde{\phi_j} $ is a $ K$-quasiconformal mapping of $ \Delta'( p_0, r)  $ into $ \Delta $. Indeed, first note that the injectivity of $ \phi_j $ implies that $ \tilde{\phi_j} $ is injective. Furthermore, the covering projection $ \pi_j $ is locally biholomorphic. Hence, for every point of $ M_j $, the corresponding local inverse branches of $ \pi_j $ are biholomorphisms and hence $ 1$-quasiconformal. Consequently, composing  with these local inverse branches does not change the maximal dilatation. Since $ \pi_j \circ \tilde{\phi_j} = \phi_j $, and $ \phi_j $ is $ K$-quasiconformal, the same holds for $ \tilde{\phi_j} $.

\medskip

In this setting, Corollary 6.4.27 of \cite{GMP} guarantees the existence of the limit 
\[
\lim_{w\to p_0} \tilde{\phi_j} (w) = q_j,
\]
where $ q_j $ is an isolated boundary point of $ \tilde{\phi_j} \big( \Delta'( p_0, r) \big)$. Moreover, the extension $ \tilde{\Phi}_j $
of $ \tilde{\phi_j} $ to $ \Delta( p_0, r) $ defined at $p_0$ by $ \tilde{\Phi}_j ( p_0) = q_j $ is a $ K $-quasiconformal homemorphism from  $ \Delta( p_0, r) $ onto the domain 
\[
X_j := \tilde{\phi_j} \big( \Delta'( p_0, r)  \big) \cup \{q_j\} \subset \overline{\Delta}.
\]
Since $ X_j $ is an open subset of $ \mathbb{C} $ contained in $ \overline{\Delta} $, every point of $ X_j $ lies in $ \Delta $. In particular, $ q_j \in \Delta $. As a consequence, the composition $ \Phi_j := \pi_j \circ \tilde{\Phi}_j $ is well-defined on $ \Delta( p_0, r) $. By construction,
\[
\Phi_j|_{ \Delta'( p_0, r) } = \pi_j \circ \tilde{\Phi}_j |_{ \Delta'( p_0, r)  } = \pi_j \circ \tilde{\phi}_j = \phi_j,
\]
and
\[
\Phi_j(p_0) = \big( \pi_j \circ \tilde{\Phi}_j \big) (p_0) = \pi_j(q_j) \in M_j.
\]  
The local biholomorphicity of $ \pi_j $ implies, by the same arguments as above, that $ \Phi_j $ is $ K$-quasiregular and locally injective on $ \Delta(p_0, r) $. We now use $ \Phi_j $ to define an extension of $ \phi_j $ across $ p_0 $ to $ \Omega \cup \{p_0\} $ as follows.
\[
\hat{\phi}_j(w)=
\begin{cases}
\phi_j(w), & w\in \Omega,\\
\Phi_j(p_0), & w =p_0.
\end{cases}
\]
It is evident that $ \hat{\phi}_j $ is $ K$-quasiregular and locally injective on $ \Omega \cup \{p_0\} $. Hence $ \hat{\phi}_j $ is open, and therefore a local homeomorphism. Moreover, $ \hat{\phi}_j $ assumes the value $ \pi_j(q_j) $ at both $ p_0 $ and $ \psi_j( \pi_j(q_j)) \neq p_0 $. We now use this observation to derive a contradiction to the injectivity of $ \phi_j $ on $ \Omega $.

\medskip

For brevity, set $ b_j := \pi_j(q_j) \in M_j $ and $ a_j := \psi_j(b_j) \in \Omega $. Since $ \hat{\phi}_j $ is a local homeomorphism, there exist disjoint neighbourhoods $ U_{0,j} $ of $ p_0 $ and $ U_{1,j} $ of $ a_j $ such that $ \hat{\phi}_j |_{U_{0,j}} $ and $ \hat{\phi}_j |_{U_{1,j}} $ are homeomorphisms onto open neighbourhoods $ V_{0,j} $ and $ V_{1,j} $, respectively, of $ b_j $ in $M_j$.
Choose 
\[
c_j \in V_{0,j} \cap V_{1,j}, \; c_j \neq b_j.
\]  
Then there are unique points $ x_{0,j} \in U_{0,j} $ and $ x_{1,j} \in U_{1,j} $ such that 
\[
\hat{\phi}_j (x_{0,j}) = c_j = \hat{\phi}_j (x_{1,j}). 
\]
Since $ \hat{\phi}_j (p_0) = b_j \neq c_j $, it follows that $ x_{0,j} \neq p_0 $. Hence $ x_{0,j}, x_{1,j} \in \Omega $, where $\hat{\phi}_j = \phi_j $. Consequently,
\[
\phi_j( x_{0,j}) = \phi_j( x_{1,j}), 
\] 
and the injectivity of $ \phi_j $ yields $ x_{0,j} = x_{1,j} $, contradicting the fact that
$ U_{0,j} $ and $ U_{1,j} $ are disjoint. This contradiction shows that the sequence $ \{ \psi_j(z_0) \} $ cannot have an accumulation point in $ A $.

\medskip

We now return to the proof of the theorem. By the preceding argument, the sequence $ \{ \psi_j(z_0) \} $ can accumulate only on $ \partial D $ or at points of $ \Omega $. The remainder of the proof is identical to that of Theorem~\ref{T1}, from which it follows that $ M $ is $K$-quasiconformally equivalent to the unit disc in the former case and to $ \Omega $ in the latter. 
\end{proof}

In the special case where $ \Omega = \Delta  \setminus \{0\} $, the following stronger result holds.

\begin{thm}
Assume that in the union problem, $ M $ is a hyperbolic Riemann surface. If $ \Omega = \Delta  \setminus \{0\} $, then  $ M $ is biholomorphic  either to $ \Delta  \setminus \{0\} $ or to the unit disc $ \Delta $.
\end{thm}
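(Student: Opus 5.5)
Throughout, the maps $\phi_j:\Delta\setminus\{0\}\to M_j$ are understood to be $K$-quasiconformal with a uniform $K$, as in the Union Problem of Theorem~\ref{iso}. The plan is to reduce to Theorem~\ref{iso} and then upgrade quasiconformal equivalence to biholomorphism by topology together with the moduli of doubly connected domains. Since $\Delta\setminus\{0\}=D\setminus A$ with $D=\Delta$ and $A=\{0\}$, Theorem~\ref{iso} shows that $M$ is quasiconformally equivalent to $\Delta\setminus\{0\}$ or to $\Delta$. The two cases correspond to whether $\{\psi_j(z_0)\}$ is relatively compact in $\Omega$ (Case (a)) or accumulates on $\partial\Delta$ (Case (b)).

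\emph{Case (b).} Here $M$ is quasiconformally equivalent to $\Delta$, hence simply connected. By uniformization, a simply connected hyperbolic Riemann surface is biholomorphic to $\Delta$, so this case is done.

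\emph{Case (a).} Here $M$ is homeomorphic to $\Delta\setminus\{0\}$, so $M$ is a doubly connected hyperbolic Riemann surface. By the classification of such surfaces, $M$ is biholomorphic either to $\Delta\setminus\{0\}$ or to an annulus $\{1<|z|<R\}$ with $R<\infty$. The task is to exclude the annulus. The key invariant is the behaviour of the ends: $\Delta\setminus\{0\}$ has a puncture end, while an annulus has no puncture.

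For the main step I would use the limit map $\psi:M\to\Delta\setminus\{0\}$ from Case (a) of Theorem~\ref{T1}, which is a $K$-quasiconformal homeomorphism. Quasiconformal maps preserve the modulus of ring domains up to the factor $K$. The ring domains $\{r<|w|<1\}$ in $\Delta\setminus\{0\}$ have modulus $\frac{1}{2\pi}\log(1/r)\to\infty$ as $r\to 0$, and they separate the two boundary components. Their preimages under $\psi$ are ring domains in $M$ separating the two ends of $M$, with modulus at least $K^{-1}\cdot\frac{1}{2\pi}\log(1/r)$. If $M$ were an annulus of finite modulus $\frac{1}{2\pi}\log R$, then every ring domain in $M$ separating its boundary components would have modulus at most $\frac{1}{2\pi}\log R$, by the monotonicity of modulus. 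This contradicts the unbounded moduli of the preimages.

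Hence $M$ is biholomorphic to $\Delta\setminus\{0\}$. The main point needing care is the step that the preimage rings separate the two ends of $M$; this follows because $\psi$ is a homeomorphism and so maps ends to ends.
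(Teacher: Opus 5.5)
Your proof is correct, but it takes a different route from the paper's.

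The paper never invokes Theorem~\ref{iso}. It works with each $M_j$ separately:
\begin{enumerate}
\item It extends the Beltrami coefficient of $\phi_j$ by zero to all of $\mathbb{C}$ and solves the Beltrami equation with $h_j(0)=0$.
\item It takes the Riemann map $\eta_j:\Delta\to h_j(\Delta)$ fixing $0$, so that $\phi_j\circ h_j^{-1}\circ\eta_j$ is a biholomorphism $\Delta\setminus\{0\}\to M_j$.
\item It then concludes with the holomorphic union theorem for the punctured disc (Theorem~1.1(ii) of \cite{BMM}).
\end{enumerate}

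You instead apply the paper's quasiconformal result, Theorem~\ref{iso}, to $M$ itself. You then upgrade quasiconformal equivalence to conformal equivalence using the classification of hyperbolic surfaces with $\pi_1\cong\mathbb{Z}$ and the quasi-invariance of moduli of essential ring domains. This step is sound: an essential ring in $\{1<|z|<R\}$ has core curve with nonzero winding number about $0$, so it separates the two boundary circles and has modulus at most $\frac{1}{2\pi}\log R$. You only need some $K$-quasiconformal homeomorphism $M\to\Delta\setminus\{0\}$, not specifically the limit map $\psi$.

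What each approach buys:
\begin{itemize}
\item Your proof stays inside the paper and avoids both the measurable Riemann mapping theorem and the external result of \cite{BMM}. However, it inherits the uniform bound $\sup K_j<\infty$ from Theorem~\ref{iso}.
\item The paper's argument works one $j$ at a time. It shows that each $M_j$ is already biholomorphic to $\Delta\setminus\{0\}$, and it needs no uniform control on the dilatations.
\end{itemize}

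Your modulus argument could also be applied directly to each $M_j$, which is hyperbolic and doubly connected. This would give an elementary replacement for the paper's Beltrami-equation step.
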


\begin{proof} Following the arguments in the proof of Theorem 3 of \cite{P}, we first show that each $ M_j $ is biholomorphic to the punctured unit disc $ \Delta \setminus \{0\} $. To this end, let $ \phi_j: \Delta  \setminus \{0\}  \rightarrow M_j $ be $K$-quasiconformal homeomorphisms and let $ \mu_{\phi_j} $ denote the Beltrami coefficient of $ \phi_j $. Let $ \mu_j : \mathbb{C} \to \mathbb{C} $ be the measurable extension of $ \mu_{\phi_j} $ obtained by setting it equal to zero outside
$ \Delta \setminus \{0\} $, that is
\[
\mu_j(z)=
\begin{cases}
\mu_{\phi_j}(z), & \mbox{ for almost every }z \in \Delta \setminus \{0\}, \\
0, & \text{ otherwise}.
\end{cases}
\]
Then 
\[
\| \mu_j \|_{\infty} \leq \frac{K-1}{K+1} < 1.
\]
Hence, by the mapping Theorem, there exists a quasiconformal mapping $ h_j : \mathbb{C} \to \mathbb{C} $, normalized by $ h_j(0) = 0 $, whose complex dilatation $ \mu_{h_j} $ coincides almost everywhere with $ \mu_j $. 

\medskip

Now, consider the quasiconformal homeomorphism $ \phi_j \circ h_j^{-1} : h_j( \Delta \setminus \{0\}) \to M_j $ and let $ \mu_{\phi_j \circ h_j^{-1}} $ denote its Beltrami coefficient.  Following the transformation rule for the Beltrami coefficient under composition, we have 
\[
\mu_{\phi_j \circ h_j^{-1}} (w) = \frac{\mu_{\phi_j}(z) - \mu_{h_j} (z) }{1 - \mu_{\phi_j}(z) \overline{\mu_{h_j}(z)}} \left( \frac{\partial h_j(z)} {|\partial h_j(z)|}\right)^2, \; w = h_j(z),
\]
for almost all $ z $ in $ \Delta \setminus \{0\} $, and hence for almost all $ w \in h_j( \Delta \setminus \{0\} )$. Since $ \mu_{h_j}=\mu_j=\mu_{\phi_j} $ almost everywhere on $ \Delta \setminus \{0\} $, it follows that $ \mu_{\phi_j \circ h_j^{-1}} = 0 $ almost everywhere on $ h_j(\Delta \setminus \{0\}) $. Consequently, $ \phi_j \circ h_j^{-1} $ is $1$-quasiconformal or, equivalently, a biholomorphism of $ h_j(\Delta \setminus \{0\}) $ onto $ M_j $. Since $ h_j $ is a homeomorphism satisfying $ h_j(0) = 0 $, it follows immediately that 
\[
h_j ( \Delta \setminus \{0\} ) = h_j( \Delta) \setminus \{0\}.
\] Moreover, $ h_j( \Delta) \subsetneq \mathbb{C} $ is a simply-connected planar domain containing the origin. By the Riemann mapping theorem, there exists a biholomorphism $ \eta_j: \Delta \to h_j( \Delta) $  such that $ \eta_j(0) = 0 $. Consequently, $ \eta_j: \Delta \setminus \{0\} \to h_j( \Delta) \setminus \{0\}$ is also a biholomorphism. It follows that
\[
 \phi_j \circ h_j^{-1} \circ \eta_j : \Delta \setminus \{0\} \to M_j,
\]
is a biholomorphism from $ \Delta \setminus \{0\} $ onto $ M_j $. In other words, each $ M_j $ is biholomorphic to $ \Delta \setminus \{0\} $. The desired conclusion now follows from Theorem 1.1(ii) of \cite{BMM}.
\end{proof}

\section{Case $ n \geq 3 $}

\noindent Let $ X $ be a non-compact Riemannian manifold of class $C^1$ and of dimension $ n \geq 2 $. Following \cite{F5}, we recall the definitions of the conformal capacity of a compact subset of $ X $ and the function $ \mu_X $, and summarize its basic properties that will be needed in the sequel.

\medskip

For a compact set $ C \subset X $, the conformal capacity of $ C $ in $ X $ is defined by  
\[
\mbox{Cap}_X(C) = \inf_u I(u, X), 
\]
where the infimum is taken over all continuous functions $ u $ with compact support in $ X $, satisfying $ u = 1 $ on $ C $, such that $ u $ admits a generalized differential $  \nabla u $ with
\[
I(u,X) := \int_X | \nabla u|^n \; dv < + \infty.
\]
Here, $ dv $ denotes the Riemannian volume element, and $ | \nabla u| $ is the norm of the generalized differential induced by the Riemannian metric. 

\medskip

The function $ \mu_X: X \times X \rightarrow [0, \infty) $ is then defined by
\[
\mu_X (x,y) = \inf_{C} \mbox{Cap}_X(C),
\]
where the infimum is taken over all compact connected subsets $ C \subset X $ that contain $ x $ and $ y $ and consist of more than one point. From \cite{F5}, it is known that $ \mu_{X} $ is continuous on $ X \times X $, and $ \mu_{X}(x,x) = 0 $ for all $ x \in X $, that $\mu_X$ satisfies the triangle inequality and finally, if $ X' $ is an open submanifold of $ X$, then $ \mu_X(x,y) \leq \mu_{X'}(x,y) $ for all $ x, y \in X' $.

\medskip
 
Based on $ \mu_X $, Ferrand introduced the following classification of non-compact Riemannian manifolds into two complementary classes. $ X $ is said to be of Class I if $ \mu_X $ is identically zero. On the other hand, $ X $ is said to be of Class II if $ \mu_X(x,y) =0 $ if and only if $ x=y $. The Euclidean space $ \mathbb{R}^n $ is an example of a Class I manifold, whereas every bounded subdomain of $ \mathbb{R}^n $ is of Class II. Moreover, every open submanifold of a Class II manifold is again in Class II. Among other things, it was also proved in \cite[(8.1)]{F5} that Classes I and II are invariant under quasiconformal homeomorphisms. In dimension $ n=2$, Ferrand's classes coincide with the classical dichotomy for non-compact Riemann surfaces: Class I consists of the parabolic Riemann surfaces, whereas Class II consists of hyperbolic Riemann surfaces.

\medskip

The following normal family criterion for quasiconformal mappings from a Riemannian manifold into $ \hat{\mathbb{R}}^n = \mathbb{R}^n \cup \{ \infty \}  $ will be useful for our purposes.

\begin{thm}\label{normal2}
Let $ X $ be a $ C^1$-smooth $ n$-dimensional Riemannian manifold and $\mathcal{F}$ be a family of $K$-quasiconformal mapppings of $X$ into $ \hat{\mathbb{R}}^n $. Suppose that there exists $d>0$ such that every $f \in \mathcal{F}$ omits two points of $ \hat{\mathbb{R}}^n $ whose spherical distance is at least $d$. Then $\mathcal{F}$ is a normal family.
\end{thm}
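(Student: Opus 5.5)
The plan is to reduce the statement to the Euclidean case, Theorem~5.1-(1) of \cite{LV} (the $n$-dimensional version of Väisälä's normality criterion), by working in coordinate charts. This mirrors the remark made for $n=2$ in Section~2. Normality is a local property, and $X$ is second countable, so a diagonal argument reduces the problem to showing that $\mathcal{F}$ is normal on a neighbourhood of each point $x_0\in X$.

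Fix $x_0 \in X$ and a $C^1$ coordinate chart $\varphi: U \to B$, where $U \ni x_0$ and $B\subset \mathbb{R}^n$ is a ball. Shrinking $U$ if necessary, I will arrange that the pulled-back Riemannian metric $g_{ij}$ on $\overline B$ is comparable to the Euclidean metric: there is $\lambda\ge 1$ with $\lambda^{-1}|v|^2 \le g_{ij}v^iv^j \le \lambda |v|^2$. This uses only continuity of $g_{ij}$, which the $C^1$ hypothesis guarantees. Consequently $\varphi$ is $K'$-quasiconformal with $K'$ depending only on $\lambda$ and $n$. The quickest way to see this is via the metric definition: the ratio of maximal to minimal stretching is bounded by $\lambda$. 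Alternatively, the moduli of curve families in $U$ and in $B$ are comparable up to a factor $\lambda^{n}$. Hence every $f\circ \varphi^{-1}$, $f\in\mathcal{F}$, is a $KK'$-quasiconformal map of $B$ into $\hat{\mathbb{R}}^n$, and it still omits two points at spherical distance at least $d$. Theorem~5.1-(1) of \cite{LV} in dimension $n$ then shows that $\{f\circ\varphi^{-1}\}$ is normal on $B$. Therefore $\mathcal{F}$ is normal on $U$, since $\varphi$ is a homeomorphism and convergence is transported back.

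For the global statement, cover $X$ by countably many such charts $U_k$. Given a sequence in $\mathcal{F}$, extract successive subsequences converging locally uniformly on $U_1, U_2,\dots$, and take the diagonal subsequence. The limits agree on overlaps, so they patch to a limit on all of $X$, with convergence uniform on compact sets; here convergence is understood with respect to the spherical metric, allowing the constant $\infty$ as a limit. Equivalently, as in Section~2, one can show equicontinuity of $\mathcal{F}$ at $x_0$ with respect to the spherical metric. The equicontinuity estimate from Theorem~4.1 of \cite{LV}, applied on $B$, transfers to $U$ because $\varphi$ is bi-Lipschitz on $\overline{B}$ with respect to the Riemannian and Euclidean distances. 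Arzelà--Ascoli on the compact target $\hat{\mathbb{R}}^n$ then finishes the argument.

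The main point requiring care is the quasiconformality of the chart, that is, ensuring that the dilatation bound $KK'$ is uniform over the family. This holds because $K'$ depends only on the chart, not on $f$. One must also check that the definition of $K$-quasiconformality on a Riemannian manifold, via the modulus of curve families computed with $dv$ and the Riemannian length, is compatible with the Euclidean one up to the factor $\lambda^{n}$. I would verify this directly from the definition of modulus using the metric comparison above.
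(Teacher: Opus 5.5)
Your proposal is correct and follows essentially the same route as the paper. The paper takes a $(1+\epsilon)$-bilipschitz chart from Lemma~3.1(a) of \cite{F4}, so that the chart and its inverse are $(1+\epsilon)^{2(n-1)}$-quasiconformal, composes with it, applies the Euclidean normality criterion, and finishes with a diagonal argument; you build a chart with comparability constant $\lambda$ by hand, which suffices since only a dilatation bound uniform over $\mathcal{F}$ matters. One correction: \cite{LV} treats only the planar case, so the $n$-dimensional criterion you use should be cited as Theorems~19.2 and~20.5 of \cite{V}, as the paper does.
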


\begin{proof}
Let $ \epsilon > 0 $ be fixed and $ x \in X $. Since $ X $ is $ C^1$-smooth, by \cite[Lemma 3.1(a)]{F4}, there exist a neighbourhood $ U $ of $ x $ and a $ (1 + \epsilon)$-bilipschitzian homeomorphism $ \theta $ of $ U $ onto a domain $ U' $ in $ \hat{\mathbb{R}}^n $. As a consequence, both $ \theta, \theta^{-1} $ are $ (1 + \epsilon)^{2(n-1)} $-quasiconformal. Thus, the family
\[
 \{ f \circ \theta^{-1}: f \in \mathcal{F} \}
\]
consists of $ (1 + \epsilon)^{2(n-1)} K $-quasiconformal mappings of $ U'$ into $ \hat{\mathbb{R}}^n $. Moreover, each mapping $ f \circ \theta^{-1} $ omits two points of $ \hat{\mathbb{R}}^n $ whose spherical distance is at least $d$. In other words, 
\[
f \circ \theta^{-1}: U' \rightarrow \hat{\mathbb{R}}^n ,
\]
$ f \in \mathcal{F} $ satisfies the hypothesis of Theorems 19.2 and 20.5 of \cite{V} and, therefore, this family is normal on $ U' $. It follows that the family $ \{ f\vert_{U}: f \in \mathcal{F} \} $
is normal on $ U $. Since $ x \in X $ was arbitrary, $ \mathcal{F} $ is locally normal on $ X $. Finally, choosing a countable exhaustion of $ X $ by relatively compact open sets and applying a diagonal argument, we conclude that $ \mathcal{F} $ is normal on $ X $.
\end{proof}

\medskip

\no Note that since each $M_j$ in the exhaustion of $M$ is non-compact, so is $M$. Therefore, Ferrand's classification applies, and $M$ is either of Class I or of Class II. Here is the main theorem of this section.

\begin{thm} \label{T2}
Let $ M $ be a $ C^1$-smooth $ n$-dimensional Riemannian manifold exhausted by an increasing sequence of open subsets $M_j \Subset M_{j+1}$ and let $ \Omega $ be a bounded domain in $ \mathbb{R}^n $. Assume that, for each $ j $, there exists a $K_j$-quasiconformal homeomorphism $\phi_j: \Omega \rightarrow M_j$, where $ K_j \rightarrow K_0 $.
\begin{enumerate}

\item [(i)] If $ M $ is of Class II and $ \Omega $ has $C^2$-smooth boundary, then  $ M $ is 
$K_0$-quasiconformally equivalent to $ \Omega $ or the unit ball in $ \mathbb{R}^n $.

\item [(ii)] If $ M $ is of Class I, then $ M $ is 
$K_0$-quasiconformally equivalent to a class I domain in $ \mathbb{R}^n $.
\end{enumerate}

\end{thm}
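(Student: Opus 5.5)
The plan is to transcribe the proof of Theorem~\ref{T1} into the higher-dimensional setting. Ferrand's function $\mu_X$ replaces the Kobayashi distance and Kiernan's pseudo-distance, and Theorem~\ref{normal2} replaces the planar normal family criterion.

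The key tool is the quasi-invariance of $\mu$ under quasiconformal maps: if $f:X\to Y$ is a $K$-quasiconformal homeomorphism then
\[
K^{-1}\mu_X(x,y)\le \mu_Y\big(f(x),f(y)\big)\le K\mu_X(x,y).
\]
This follows directly from the definition, since $I(u\circ f^{-1},Y)\le K\, I(u,X)$ for admissible $u$. Combined with the monotonicity $\mu_M\le\mu_{M_j}$ on $M_j$, it gives, for $z_1,z_2\in M$ and large $j$,
\[
\mu_M(z_1,z_2)\le \mu_{M_j}(z_1,z_2)\le K\,\mu_{\Omega}\big(\psi_j(z_1),\psi_j(z_2)\big),
\]
where $\psi_j=\phi_j^{-1}$ and $K=\sup_j K_j$.

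\textbf{Case (i).} Fix $z_0\in M$ and split according to whether $\{\psi_j(z_0)\}$ is relatively compact in $\Omega$ or accumulates at $p_0\in\partial\Omega$.

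In the interior case I follow the three steps of Theorem~\ref{T1}.
\begin{enumerate}
\item Since $\Omega$ is bounded, Theorem~\ref{normal2} shows that $\{\psi_j\}$ is normal on each $U_n$ of an exhaustion of $M$. A diagonal subsequence converges to a limit $\psi$. By the $n$-dimensional analogue of \cite[Theorem 7.3]{F1} (Väisälä's limit theorem \cite{V}, applied in the bilipschitz charts of \cite[Lemma 3.1]{F4}), $\psi$ is either constant or $K_0$-quasiconformal.
\item Passing to the limit in the displayed inequality, using continuity of $\mu_\Omega$, gives $\mu_M(z_1,z_2)\le K\mu_\Omega(\psi(z_1),\psi(z_2))$. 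Since $M$ is of Class II, this forces $\psi$ to be injective, hence nonconstant, hence a $K_0$-quasiconformal map into $\Omega$ by invariance of domain.
\item Normalize $\Phi_j=\psi\circ\phi_j$, which maps into the bounded set $\psi(M)$, and argue exactly as in Steps II and III of Theorem~\ref{T1}. This yields $\phi_j\to\phi=\psi^{-1}\circ\Phi$, and the identities $\phi\circ\psi=\mathrm{id}$ and $\psi\circ\phi=\mathrm{id}$ show that $\psi:M\to\Omega$ is a $K_0$-quasiconformal homeomorphism.
\end{enumerate}

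In the boundary case, rescale by the affine maps $A_j(x)=(x-\psi_j(z_0))/|r(\psi_j(z_0))|$, where $r$ is a $C^2$ local defining function. The domains $\Omega_j=A_j(\Omega)$ converge in the local Hausdorff sense to a half-space $H$, which is conformally (Möbius) equivalent to the unit ball. Each $\Omega_j$ eventually omits two fixed points of $\hat{\mathbb R}^n\setminus\overline H$, so Theorem~\ref{normal2} applies to $\tilde\psi_j=A_j\circ\psi_j$.

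The main obstacle is injectivity of the limit $\tilde\psi$. This needs a stability estimate
\[
\limsup_{j\to\infty}\mu_{\Omega_j}(x,y)\le \mu_H(x,y),
\qquad
\mu_{\Omega_j}\big(\tilde\psi_j(z),\tilde\psi(z)\big)\to 0 .
\]
The first inequality holds because every compact connected $C\subset H$ containing $x,y$, together with an admissible $u$ compactly supported in $H$, is admissible for $\Omega_j$ for large $j$. Taking the infimum gives the $\limsup$ bound. For the second, use a small ball $B$ around $\tilde\psi(z)$ contained in all $\Omega_j$: then $\mu_{\Omega_j}\le\mu_B$, and $\mu_B$ is continuous with $\mu_B(x,x)=0$. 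Once these hold, the rest of Case (b) of Theorem~\ref{T1} carries over verbatim, and $M$ is $K_0$-quasiconformal to $H$, hence to the unit ball.

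\textbf{Case (ii).} Here $M$ is of Class I and no convergence at a base point is available. I instead aim to realize $M$ directly as a domain in $\mathbb R^n$. Normalize by Möbius maps $T_j$ of $\hat{\mathbb R}^n$ so that $\psi_j'=T_j\circ\psi_j$ fixes $z_0$ and a second point $z_1$, say $\psi_j'(z_0)=0$ and $\psi_j'(z_1)=e_1$, and so that $\infty\notin T_j(\Omega)$ in a controlled way. Normality then follows from the three-point version of \cite{V}, which is again Theorem~\ref{normal2} in charts. The limit $\psi'$ is injective by the same two-point normalization argument (a $K$-quasiconformal limit with two distinct fixed values cannot be constant), and the inverse argument as before shows it is a homeomorphism onto a domain $D\subset\hat{\mathbb R}^n$. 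Finally, $D$ is of Class I by \cite[(8.1)]{F5}. If $\infty\in D$, a further Möbius map removes it, giving a Class I domain in $\mathbb R^n$. This two-point normalization is the step I expect to be delicate, and a careful omitted-values argument will be needed to verify normality there.
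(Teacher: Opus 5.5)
Your part (i) is the paper's argument: quasi-invariance of $\mu$, monotonicity under $M_j\subset M$, the Class II hypothesis forcing the limit to be injective, and, in the boundary case, local Hausdorff convergence to control $\mu_{\Omega_j}$. Your bound $\limsup_j\mu_{\Omega_j}\le\mu_H$ does the same job as the paper's $\mu_{\Omega_j}\le\mu_V$ for a fixed $V\Subset\Omega_\infty$.

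Part (ii) takes a genuinely different route. The paper lets the unnormalized $\psi_j$ converge and uses \cite[Theorem C(a)]{F1} (no quasiconformal map of a Class I manifold into a Class II domain) to see that the limit is a constant $c$. It then blows up around $c$ into $S^n$ and quotes Ferrand's exhaustion theorem \cite[Theorem 9.3]{F1}. You instead renormalize by maps $T_j$ with $T_j\psi_j(z_0)=0$ and $T_j\psi_j(z_1)=e_1$, and take the limit directly. This in effect proves the needed case of Theorem 9.3 by hand. It bypasses both of Ferrand's theorems and shows that any such $M$ embeds $K_0$-quasiconformally in $\hat{\mathbb R}^n$; Class I enters only at the end, via \cite[(8.1)]{F5}. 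The paper's route is shorter given Ferrand's machinery; yours is more self-contained.

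The step you flagged closes easily, but not through a ``three-point version'' of Theorem~\ref{normal2}. Take $T_j$ to be similarities, so that $T_j(\infty)=\infty$. Because $\psi_j'$ is injective, it omits $0$ and $\infty$ on $M_j\setminus\{z_0\}$, and it omits $e_1$ and $\infty$ on $M_j\setminus\{z_1\}$. So Theorem~\ref{normal2} applies on each of these open sets. They cover $M_j$ and normality is local, so $\{\psi_j'\}$ is normal on every $U_n$.

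Two smaller points. First, the ``inverse argument as before'' is unnecessary, and would be awkward since $T_j(\Omega)$ need not converge. A nonconstant locally uniform limit of $K_j$-quasiconformal embeddings with $K_j\to K_0$ is already a $K_0$-quasiconformal embedding, hence a homeomorphism onto the open set $D=\psi'(M)$. Second, state explicitly that $D\ne\hat{\mathbb R}^n$ because $M$ is noncompact; that is what lets a M\"obius map send a missing point to $\infty$.
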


\noindent\textit{Proof of Theorem \ref{T2}(i).} We follow the proof of Theorem \ref{T1} verbatim, using the same notation. By Theorem \ref{normal2}, for each $ n \ge 1$, the family $ \{ \psi_j := \phi_j^{-1}\vert_{U_n}: j \geq N\}$ is normal. Hence, by a diagonal argument there exists a subsequence, still denoted by $ \{ \psi_j \} $, which converges uniformly on compact subsets of $ M $ to $ \psi: M \rightarrow \overline{\Omega} $. The injectivity of the limit map $ \psi $ in Case (a), namely when $ \psi(z_0) \in \Omega $, is proved using the metric $ \mu_M $ as follows.
Firstly, note that, under the assumptions of (a), it follows exactly as in the proof of Theorem \ref{T1} that $\psi: M \to \Omega$. 

\medskip

Now, fix $ x_1 $ and $ x_2$ any two points in $ M $. Then for each $j$,
\begin{equation}
\mu_M(x_1,x_2) = \mu_M \big( \phi_j \circ \psi_j (x_1), \phi_j \circ \psi_j (x_2) \big). \label{0.21}
\end{equation}
Since $ M_j \subset M $, it follows that
\begin{equation*}
\mu_M \big( \phi_j \circ \psi_j (x_1), \phi_j \circ \psi_j (x_2) \big) \leq \mu_{M_j} \big( \phi_j \circ \psi_j (x_1), \phi_j \circ \psi_j (x_2) \big).    
\end{equation*}
Now, applying Theorem 2.4(c) of \cite{F1} to the $K_j$-quasiconformal homeomorphisms $\phi_j: \Omega \rightarrow M_j$ gives the following result:
\begin{equation}
\mu_{M_j} \big( \phi_j \circ \psi_j (x_1), \phi_j \circ \psi_j (x_2) \big) 
\leq K_j \; \mu_{\Omega}  \big( \psi_j(x_1), \psi_j(x_2) \big) \label{0.22}.
\end{equation}
Moreover,
\begin{alignat}{3} \label{0.23} 
\mu_{\Omega}  \big( \psi_j(x_1), \psi_j(x_2) \big) 
\leq 
\mu_{\Omega}  \big( \psi_j(x_1), \psi(x_1) \big) + \mu_{\Omega}  \big( \psi(x_1), \psi(x_2) \big)  \nonumber \\
+ \; \mu_{\Omega}  \big( \psi(x_2), \psi_j(x_2) \big). 
\end{alignat}
Furthermore, 
\[
\mu_{\Omega}
\bigl({\psi}_j(x_i),{\psi}(x_i)\bigr)
\to 0,
\qquad i=1,2.
\]
Combining the above observation with (\ref{0.21}), (\ref{0.22}) and (\ref{0.23}) and letting 
$ j \rightarrow \infty$ gives
\[
\mu_M(x_1,x_2) \leq \mu_{\Omega}  \big( \psi(x_1), \psi(x_2) \big).
\]
Since $ M $ is of Class II, we must have $ x_1 = x_2 $ whenever $ \psi(x_1) = \psi(x_2) $. It follows that $ \psi $ is injective on $ M $. Therefore, in Case (a), following the remaining steps of the proof of Theorem \ref{T1}, we conclude that the limiting map $ \psi $ is a $ K_0 $-quasiconformal homeomorphism from $ M $ onto $ \Omega $.

\medskip

We now consider Case (b), when $ \psi(z_0) \in \partial \Omega $. After passing to a subsequence, assume that $ \psi_j(z_0) \to p_0 \in \partial \Omega $. As before, the $\psi_j$'s converge locally uniformly on $M$ to the constant map $\psi_{\infty} \equiv p_0$. Therefore, every compact set in $M$ is eventually mapped near $p_0$ by $\psi_j$ for $j$ large. Applying the affine dilations 
\[
A_j(x) = \frac{ x - \psi_j(z_0)}{ - r( \psi_j(z_0))}, \; x \in \mathbb{R}^n,
\] 
where $ r $ is a local defining function for $ \Omega $ near $ p_0 $, we obtain scaled domains $\Omega_j := A_j(\Omega) $. Since $ \psi_j(z_0) \to p_0 \in \partial \Omega $, the domains $ \Omega_j $ converge in the local Hausdorff sense to the half-space
\[
\Omega_{\infty}  = \{ x \in \mathbb{R}^n: \langle \nabla r (p_0), x \rangle < 1\}.
\]
Define 
\[
\tilde{\psi}_j := A_j \circ \psi_j: M_j \to \Omega_j 
\quad \text{and} \quad
\tilde{\phi}_j := \phi_j \circ A_j^{-1} : \Omega_j \to M_j,
\]
and note that $ \tilde{\psi}_j(z_0) = 0 \in \Omega_{\infty} $ for each $j$. Moreover, since each $ A_j $ is an affine similarity, $ \tilde{\psi}_j $ and $\tilde{\phi}_j $ are $ K_j$-quasiconformal homeomorphisms that are inverses of each other. Arguing exactly as in the proof of Theorem \ref{T1} and using Theorem \ref{normal2}, we infer that $ \{\tilde{\psi}_j \}$ is normal and obtain a limit map $ \tilde{\psi} : M \rightarrow \Omega_{\infty} $ which is constant or quasiconformal. The remaining step is to prove that $ \tilde{\psi} $
is injective. Once this is established, the proof that $ \tilde{\psi} $ is a $ K_0 $-quasiconformal homeomorphism of $ M $ onto $ \Omega_{\infty} $
proceeds analogously to that of Theorem \ref{T1}. 

\medskip

In order to establish that $ \tilde{\psi} $ is injective, fix points $ x_1 $ and $ x_2$ in $ M $ and note that 
\begin{equation}
\mu_M(x_1,x_2) = \mu_M \big( \tilde{\phi}_j \circ \tilde{\psi}_j (x_1), \tilde{\phi_j} \circ \tilde{\psi}_j (x_2) \big) \label{0.31}
\end{equation}
for each $j$. As $ M_j \subset M $, we deduce that
\begin{equation*}
\mu_M \big( \tilde{\phi}_j \circ \tilde{\psi}_j(x_1), \tilde{\phi}_j \circ \tilde{\psi}_j (x_2) \big) \leq \mu_{M_j} \big( \tilde{\phi}_j \circ \tilde{\psi}_j (x_1), \tilde{\phi}_j \circ \tilde{\psi}_j (x_2) \big).    
\end{equation*}
Now, an application of Theorem 2.4(c) of \cite{F1} to the $K_j$-quasiconformal homeomorphisms $\tilde{\phi}_j: \Omega_j \rightarrow M_j$ implies that 
\begin{equation}
\mu_{M_j} \big( \tilde{\phi}_j \circ \tilde{\psi}_j (x_1), \phi_j \circ \psi_j (x_2) \big) 
\leq K_j \; \mu_{\Omega_j}  \big( \tilde{\psi}_j(x_1), \tilde{\psi}_j(x_2) \big) \label{0.32}.
\end{equation}
By the triangle inequality,
\begin{alignat}{3} \label{0.33} 
\mu_{\Omega_j}  \big( \tilde{\psi}_j(x_1), \tilde{\psi}_j(x_2) \big) 
\leq 
\mu_{\Omega_j}  \big( \tilde{\psi}_j(x_1), \tilde{\psi}(x_1) \big) + \mu_{\Omega_j}  \big( \tilde{\psi}(x_1), \tilde{\psi}(x_2) \big)  \nonumber \\
+ \; \mu_{\Omega_j}  \big( \tilde{\psi}(x_2), \tilde{\psi}_j(x_2) \big)
\end{alignat}
holds for all $ j $ sufficiently large. Since 
\[
\tilde{\psi}_j(x_1) \rightarrow \tilde{\psi}(x_1) \in \Omega_{\infty},
\]
we may choose a $ r_1 > 0 $ such that the Euclidean ball $ B \bigl( \tilde{\psi}(x_1), r_1 \bigr) $ is relatively compactly contained in $ \Omega_{\infty} $ and $\tilde{\psi}_j(x_1) $ are contained in $ B \big( \tilde{\psi}(x_1), r_1 \big) $ for all sufficiently large $ j $. Moreover, since $ \Omega_j \to \Omega_{\infty} $ in the local Hausdorff sense, it follows that $ B \bigl( \tilde{\psi}(x_1), r_1 \bigr) $ is relatively compactly contained in $ \Omega_j $ for all sufficiently large $ j $. In this setting, the monotonicity property of $\mu_X$ gives
\[
\mu_{\Omega_j}
\bigl(\tilde{\psi}_j(x_1), \tilde{\psi}(x_1)\bigr)
\leq \mu_{B \bigl( \tilde{\psi}(x_1), r_1 \bigr) } \bigl(\tilde{\psi}_j(x_1), \tilde{\psi}(x_1)\bigr),
\]
for all sufficiently large $ j $. Further, by the properties of $\mu_X$, the expression on the right side above tends to zero, and hence
\[
\mu_{\Omega_j}
\bigl(\tilde{\psi}_j(x_1),\tilde{\psi}(x_1)\bigr)
\to 0.
\]
Similarly, it can be proved that 
\[
\mu_{\Omega_j}
\bigl(\tilde{\psi}(x_2),\tilde{\psi}_j(x_2)\bigr)
\to 0.
\]
To control $ \mu_{\Omega_j}  \big( \tilde{\psi}(x_1), \tilde{\psi}(x_2) \big) $ as $ j \rightarrow \infty $, first choose an open set $ V $ such that 
\[
\{  \tilde{\psi}(x_1), \tilde{\psi}(x_2) \} \subset V \subset \overline{V} \subset \Omega_{\infty}.
\]
As before, the local Hausdorff convergence $ \Omega_j \rightarrow \Omega_{\infty} $ implies that 
$ V \subset \Omega_j $ for all sufficiently large $ j $. Invoking the monotonicity properties of $\mu_X$ again, we obtain
\[
\mu_{\Omega_j}  \big( \tilde{\psi}(x_1), \tilde{\psi}(x_2) \big) \leq \mu_{V}  \big( \tilde{\psi}(x_1), \tilde{\psi}(x_2) \big),
\]
for all sufficiently large $ j $. Finally, combining the above observation with (\ref{0.31}), (\ref{0.32}) and (\ref{0.33}) and letting $ j \rightarrow \infty$ gives
\[
\mu_M(x_1,x_2) \leq K_0 \; \mu_{V}  \big( \tilde{\psi}(x_1), \tilde{\psi}(x_2) \big).
\]
Since $ M $ is of Class II, we must have $ x_1 = x_2 $ whenever $ \tilde{\psi}(x_1) = \psi(x_2) $. It follows that $ \tilde{\psi} $ is injective on $ M $. The remainder of the argument proceeds exactly as in the proof of Theorem \ref{T1}, after making the relevant changes. Thus, in Case (b), $ M $ is $ K_0 $-quasiconformally equivalent to $ \Omega_{\infty} $. Since $ \Omega_{\infty} $ is conformally equivalent to the unit ball of $ \mathbb{R}^n $ via a M\"{o}bius transformation, the theorem follows.
\qed

\medskip

\noindent\textit{Proof of Theorem \ref{T2}(ii).}
We retain the notation from the proof of part (i).
As before, let $ \{U_n: n \ge 1\}$ be an exhaustion of $ M $ by open subsets such that $U_n \Subset U_{n+1} $ for all $ n \ge 1$. For each $ n $, there exists an $ N = N(n) \ge 1$ such that for every $j \geq N$, $U_n \subset M_j$ and hence $\psi_j$ is defined on $ U_n $. As $ \Omega $ is bounded, Theorem~\ref{normal2} guarantees the normality of $ \{\psi_j\vert_{U_n}\}_{j \geq N} $. It follows that the limit map $ \psi : M \rightarrow \overline{\Omega} $ is either constant or a $ K_0$-quasiconformal mapping of $ M $ into $ \Omega $. Recall that $ \Omega $, being a bounded domain in $ \mathbb{R}^n $, is of Class II, whereas $ M $ is of Class I. Therefore, by Theorem C (a) of \cite{F1}, there is no quasiconformal mapping from $ M $ into $ \Omega $. Consequently, the limit map must be constant, say $ \psi \equiv c \in \overline{\Omega} $. 

\medskip

We employ a similar construction as in the proof of Theorem 10.2 of \cite{F1}. Choose a sequence of neighbourhoods $ V_n = B(c, r_n)$ of $ c $ in $ \mathbb{R}^n $ with $ r_n \downarrow 0 $. The uniform convergence $ \psi_j \rightarrow c $ on the compact set $ \overline{U}_n $ implies that $ \psi_{j}( \overline{U}_n ) \subset V_n $ for all sufficiently large $j$. Since $ \overline{U}_n \subset M_{j} $ for all sufficiently large $j$, we may choose a strictly increasing sequence of integers $ \{ p_n \} $ such that $ \overline{U}_n \subset M_{p_n} $ and $ \psi_{p_n}( \overline{U}_n ) \subset V_n $ for every $ n$. Let $ h_n $ denote the affine map sending $ V_n $ onto the Euclidean unit ball in $ \mathbb{R}^n $. Composing $ h_n $ with the inverse stereographic projection, restricted to the unit ball, yields a $ 1$-quasiconformal mapping $ g_n $ from $ V_n $ into the $ n $-dimensional sphere $ S^n $. It follows that the maps $  g_n \circ \psi_{p_n}  $ are $ K_{p_n} $-quasiconformal on $ U_n $. Since $ \{U_n \} $ exhausts $ M $ and $ K_{p_n} \rightarrow K_0 $, the hypotheses of Theorem 9.3 of \cite{F1} are satisfied. Hence, $ M $ is $K_0$-quasiconformally equivalent to a domain of $ S^n $. Furthermore, since $ M $ is non-compact, this domain is necessarily a proper subdomain of $ S^n $. Via the stereographic projection, it is conformally equivalent to a domain $ G \subseteq \mathbb{R}^n $. As the property of being Class I is preserved under quasiconformal homeomorphisms, $ G $ is of Class I. 
\qed

\medskip

When $ K_0=1 $ and $ M $ is $C^{\infty}$-smooth, a stronger conclusion holds.

\begin{cor}
Let $ M $ be a $ C^{\infty}$-smooth $ n$-dimensional Riemannian manifold exhausted by an increasing sequence of open subsets $M_j \Subset M_{j+1}$ and let $ \Omega $ be a bounded domain in $ \mathbb{R}^n $. Assume that for each $ j $, there exists a $ K_j $-quasiconformal homeomorphism $\phi_j: \Omega \rightarrow M_j$ where $ K_j \rightarrow 1 $.

\begin{enumerate}

\item [(i)] If $ M $ is of Class II and $ \Omega $ has $C^2$-smooth boundary, then there exists a conformal $C^{\infty}$-diffeomorphism from $ M $ onto $ \Omega $ or the unit ball in $ \mathbb{R}^n $.    

\item [(ii)] If $ M $ is of Class I, then there exists a conformal $C^{\infty}$-diffeomorphism from $ M $ onto a Class I domain in $ \mathbb{R}^n $.
\end{enumerate}

\end{cor}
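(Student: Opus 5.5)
Apply Theorem \ref{T2} with $K_0=1$. In case (i) this gives a $1$-quasiconformal homeomorphism $\psi$ from $M$ onto $\Omega$ or onto the unit ball $B\subset\mathbb{R}^n$. In case (ii) it gives a $1$-quasiconformal homeomorphism from $M$ onto a domain $G\subset\mathbb{R}^n$. Such a $G$ is of Class I, since Ferrand's classes are invariant under quasiconformal homeomorphisms by \cite[(8.1)]{F5}. The only remaining task is to upgrade "$1$-quasiconformal homeomorphism" to "conformal $C^\infty$-diffeomorphism". This is a regularity statement. It is local, so it suffices to prove it near each point of $M$.

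For the regularity step I would use the theorem of Ferrand (Lelong-Ferrand) that a $1$-quasiconformal homeomorphism between $C^\infty$ Riemannian manifolds of dimension $n\ge 3$ is a conformal $C^\infty$-diffeomorphism. This is the Riemannian version of Gehring's regularity theorem, which says that $1$-quasiconformal maps between Euclidean domains are Möbius; the general version follows from Ferrand's work on the regularity of conformal maps of Riemannian manifolds.

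Concretely, the argument near a point $x\in M$ would go as follows. Take a chart $\theta$ in which $g=\lambda\,\delta$ on a neighbourhood, so that $\theta$ is conformal. The map $\psi\circ\theta^{-1}$ is then a $1$-quasiconformal map between Euclidean domains, hence a restriction of a Möbius map by Gehring--Reshetnyak, and in particular smooth. The difficulty is that for $n\ge3$ a general smooth metric is not locally conformally flat. So this shortcut does not work in general, and one must instead invoke the regularity theorem directly. That theorem proceeds by showing that a $1$-quasiconformal map is $W^{1,n}_{loc}$ and is a weak solution of the $n$-harmonic-type system $d^*(|d u|^{n-2}du)=0$ in harmonic coordinates. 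Elliptic regularity, as in the results of Iwaniec/Lelong-Ferrand, then gives $C^{1,\alpha}$, and bootstrapping gives $C^\infty$, with nonvanishing Jacobian by injectivity.

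The target domains $\Omega$, $B$ and $G$ carry the flat metric, and the ball is conformally equivalent to the half-space $\Omega_\infty$ by a Möbius map, which is smooth. So the composition is a conformal $C^\infty$-diffeomorphism onto the stated domain. The main obstacle is exactly this regularity step for non-conformally-flat smooth $M$. I would settle it by citing Ferrand's theorem rather than reproving it, noting that $C^\infty$-smoothness of $M$ is the hypothesis that theorem requires.
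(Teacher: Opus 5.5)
Your proposal is correct and follows the paper's proof: you specialize Theorem \ref{T2} to $K_0=1$, then invoke Ferrand's regularity theorem (Theorem A of \cite{F4}) to upgrade the $1$-quasiconformal limit map to a conformal $C^\infty$-diffeomorphism, composing with a M\"{o}bius map in the half-space case. One small correction to your heuristic sketch: a nonvanishing Jacobian does not follow from injectivity alone (consider $x\mapsto x^3$); it comes from applying the same regularity result to the inverse, which is also $1$-quasiconformal. Since you cite the theorem rather than reprove it, this does not affect your argument.
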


\begin{proof} In each of the two cases, it follows from Theorem \ref{T2} that the limit map is $ 1 $-quasiconformal. In other words, it is a conformal homeomorphism. By Theorem A of \cite{F4}, every $ 1$-quasiconformal mapping between $C^{\infty} $-smooth Riemannian manifolds is $ C^{\infty} $. Hence, the limit map is a conformal diffeomorphism of $ M $.     
\end{proof}

In Example 2 of \cite{G}, Gehring constructed a bounded domain in $ \mathbb{C}^n $, $ n \geq 3 $, which is not quasiconformally equivalent to the unit ball. This domain is $C^1$-smooth everywhere except at one point. Although this domain is quasiconformally homogeneous, it was observed in \cite{KV} that it does not admit a sequence of $K_j$-quasiconformal self maps, with $K_j$ uniformly bounded, that take an interior point to the isolated singular boundary point. Theorem \ref{T2} (i) illustrates another property of Gehring's domain, namely, that it cannot even be exhausted by quasiconformal images of the ball in $\mbb R^n$, with uniformly bounded dilatations.

\begin{thm}
Assume that in the Union Problem, $ M $ is a bounded domain in $ \mathbb{R}^n $ and $ \Omega = D \setminus A $, where $ D $  is a bounded domain in $ \mathbb{R}^n $ with $C^2$-smooth boundary and $ A \subset D $ is finite. Then $ M $ is $K$-quasiconformally equivalent to $ \Omega $ or to the unit ball in $ \mathbb{R}^n $.    
\end{thm}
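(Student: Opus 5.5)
The plan is to mimic the proof of Theorem~\ref{iso}, replacing the Kobayashi/Kiernan machinery by Ferrand's function $\mu$ and the normality criterion of Theorem~\ref{normal2}. Let $\phi_j:\Omega\to M_j$ be $K$-quasiconformal homeomorphisms with inverses $\psi_j$, and fix $z_0\in M$. Since $M$ is a bounded domain in $\mbb R^n$, it is of Class II. Hence, once we know that $\{\psi_j(z_0)\}$ cannot accumulate at a point of $A$, the proof of Theorem~\ref{T2}(i) applies verbatim. If $\psi_j(z_0)$ accumulates in $\Omega$, we are in Case (a), and $M$ is $K$-quasiconformally equivalent to $\Omega$; here $\mu_\Omega$ plays the role of the distance, and $\psi(M)\subset\Omega$ follows from openness. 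If it accumulates on $\partial D$, we are in Case (b), and the scaling near the $C^2$-smooth boundary point gives a half-space and hence the ball. The only place the punctures could interfere is Case (b); there they do not, because the rescaled $A_j(A)$ escapes to infinity relative to compacta in the limit half-space.

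The heart of the proof is therefore to exclude $\psi_j(z_0)\to p_0\in A$. Suppose it happens. Take a small ball $B=B(p_0,r)$ with $B\cap A=\{p_0\}$, and write $B'=B\sm\{p_0\}$. Let $\sigma_j$ be the sphere centred at $p_0$ through $\psi_j(z_0)$.

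First I would show that $\phi_j(\sigma_j)\to z_0$. For $w\in\sigma_j$, the monotonicity of $\mu$ and Theorem 2.4(c) of \cite{F1} give
\[
\mu_M\big(\phi_j(w),z_0\big)\le \mu_{M_j}\big(\phi_j(w),\phi_j(\psi_j(z_0))\big)\le K\,\mu_{B'}\big(w,\psi_j(z_0)\big)\le K\,\mathrm{Cap}_{B'}(\sigma_j).
\]
The conformal capacity of the sphere $\sigma_j$ in the punctured ball $B'$ is bounded by its capacity in the ring $\{\rho_j/2<|x-p_0|<r\}$, where $\rho_j=|\psi_j(z_0)-p_0|$. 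This ring has modulus bounded by a constant, but the ring capacity does not tend to zero. So I would instead use that $\{p_0\}$ has zero capacity, via the test function $u=\log(|x-p_0|/\rho_j^2)/\log(1/\rho_j)$ on the ring $\rho_j^2<|x-p_0|<\rho_j$, modified on the outer side symmetrically. More precisely, I would exhibit an admissible $u$ equal to $1$ on $\sigma_j$ and supported in $\{\rho_j^2\le|x-p_0|\le\sqrt{\rho_j}\}$ whose $n$-energy is $O\big((\log 1/\rho_j)^{1-n}\big)\to0$. This gives $\mu_M(\phi_j(w),z_0)\to0$ uniformly on $\sigma_j$. Since $M$ is Class II and $\mu_M$ is continuous and vanishes only on the diagonal, I expect, after checking via Ferrand's results, that $\mu_M$ induces the topology on $M$. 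That would give $\phi_j(\sigma_j)\subset U$ for a small ball $U\Subset M$ around $z_0$, for all large $j$.

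Next I would derive the contradiction topologically rather than by lifting, since lifting was special to $n=2$. The sphere $\sigma_j$ separates $\Omega$ into the punctured ball $B_j'=\{0<|x-p_0|<\rho_j\}$ and the outer component. Its image $\phi_j(\sigma_j)$ is a topological $(n-1)$-sphere in the ball $U\subset\mbb R^n$, so by Jordan--Brouwer it bounds a relatively compact component $W_j\subset U$. The set $\phi_j(B_j')$ is a connected component of $M_j\sm\phi_j(\sigma_j)$. If $\phi_j(B_j')$ equals $W_j$, then $\phi_j$ restricted to $B_j'$ maps onto an open ball minus nothing. I would then apply the quasiconformal removability of isolated points (\cite[Theorem 17.3]{V}) to extend $\phi_j$ to a homeomorphism of $B_j$ onto $W_j\cup\{q\}$ with $q\in\overline{W_j}\sm W_j$ isolated. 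This is impossible, since $\partial W_j=\phi_j(\sigma_j)$ contains no isolated point. Otherwise $\phi_j(B_j')$ is the unbounded side, containing $M_j\sm\overline{U}$. Then the outer part of $\Omega$, which has diameter bounded below and lies well away from $p_0$, maps into $W_j\subset U$. Choosing a fixed point $y\in\Omega$ with $|y-p_0|>r$, we get $\phi_j(y)\in U$, so $\psi_j$ maps a point of $U$ to $y$. I would derive a contradiction using equicontinuity: normality of $\{\psi_j\}$ on $\overline{U}$ (Theorem~\ref{normal2}) together with $\psi_j(z_0)\to p_0$ does not immediately forbid this. So I would instead take the outer side to contain $\partial\Omega$-near points, and use that $\psi_j(M_j\sm\overline U)$ would then miss a neighbourhood of $\partial D$, contradicting properness of the homeomorphism $\psi_j:M_j\to\Omega$.

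I expect this last dichotomy, and making the separation argument airtight, to be the main obstacle. The capacity estimate should be routine.
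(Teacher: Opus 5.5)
Your proposal is correct, but at the decisive step, excluding $\psi_j(z_0)\to p_0\in A$, it takes a different route from the paper.

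The paper removes all punctures at once. Since $A$ has capacity zero, each $\phi_j$ extends to a $K$-quasiconformal homeomorphism $\Phi_j$ of $D$ with $q_j=\Phi_j(p_0)\in\partial M_j$. Boundedness of $M$ makes $\{\Phi_j\}$ normal, and a limit $\Phi$ would satisfy both $\Phi(p_0)=\lim\Phi_j(\psi_j(z_0))=z_0$ and $\Phi(p_0)=\lim q_j$. This is impossible, because $q_j\to z_0\in M_k$ would force $q_j\in M_k\subset M_j$.

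You instead transplant the local mechanism of Theorem~\ref{iso}. The bound $\mathrm{Cap}_{B'}(\sigma_j)=O\big((\log 1/\rho_j)^{1-n}\big)$ replaces $\ell_{hyp}(\sigma_j)\to 0$, and the Jordan--Brouwer dichotomy replaces the lift to the disc. If $\phi_j(B_j')=W_j$, removability of an isolated point clashes with $\partial W_j=\phi_j(\sigma_j)$ having no isolated points. Even more simply, $H_{n-1}(B_j')\cong\mbb Z$ whereas $W_j$ is acyclic by Alexander duality. Otherwise $\phi_j(O_j)=W_j$, so $O_j\subset\psi_j(\overline{W_j})$ is relatively compact in $\Omega$, which is false since $O_j$ reaches $\partial D$. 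This is the clean form of your properness argument, and the detour through a fixed point $y$ and equicontinuity can be discarded.

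The paper's proof is shorter but uses boundedness of $M$ globally, for normality of the extended maps into $\overline M$. Yours uses it only to turn $\mu_M(\phi_j(w),z_0)\to 0$ into $\phi_j(w)\to z_0$. So it stays closer to Theorem~\ref{iso}, and with a coordinate ball in place of $U$ it would adapt to Class II manifolds $M$ on which $\mu_M$ is known to induce the topology.

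Two loose ends remain.
\begin{enumerate}
\item For bounded $M$ you need not leave the topology claim as an expectation. If $M\subset B(z_0,R)$, then $\mathrm{Cap}_M(C)\ge \mathrm{Cap}_{B(z_0,R)}(C)$, and the Gr\"otzsch--Teichm\"uller estimate gives $\mu_M(x,z_0)\ge c_n(|x-z_0|/R)>0$. This is exactly the uniform statement you use.
\item In Case (a), openness of $\psi(M)$ only gives $\psi(M)\subset D$, not $\psi(M)\subset\Omega$. Apply your exclusion step with every $x\in M$ in place of $z_0$ to see that $\psi$ omits $A$; this is also needed to make sense of $\mu_\Omega(\psi_j(x),\psi(x))\to 0$. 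The paper passes over this point as well.
\end{enumerate}
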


\begin{proof} Let $ \phi_j : \Omega \rightarrow M_j $ be $ K$-quasiconformal homeomorphisms, and let $ \psi_j = \phi_j^{-1} $ be their inverses. Fix $ z_0 \in M $. As in the proof of Theorem~\ref{iso}, our goal is to show that the sequence $ \{ \psi_j(z_0) \} $ cannot have an accumulation point in $ A $.  

\medskip

Assume, to the contrary, that a subsequence of $ \{ \psi_j(z_0) \} $, which we continue to denote by the same symbol, converges to $ p_0 \in A $. Since $A $ is a finite set, it has  conformal capacity zero. Theorem 6.4.26 of \cite{GMP} now implies that $ \phi_j $ admits an extension 
\[
\Phi_j: D \rightarrow M_j \cup C_{\phi_j}(A),  
\]
where $ C_{\phi_j} (A) $ denotes the cluster set of $ \phi_j $ at $ A $. Moreover, $ \Phi_j $ is a $ K $-quasiconformal homeomorphism from $ D $ onto $ M_j \cup C_{\phi_j}(A) $. Furthermore, for every $ p \in A $, the cluster set $ C_{\phi_j}(p)$ is a singleton boundary point of $ M_j $ and $ \Phi_j(p) $ is precisely the unique point in $ C_{\phi_j}(p)$, and the cluster sets corresponding to distinct points of $ A $ are disjoint. In particular, $ \Phi_j(D) \subset \overline{M} $.

\medskip

Since $ M $ is bounded, each map $ \Phi_j $
omits two values of $\mathbb{R}^n \cup \{ \infty\} $
whose spherical distance is bounded below by a positive constant independent of $ j $. It follows from Theorems 19.2 and 20.5 of \cite{V} that the family $ \{ \Phi_j \} $ is normal. Passing to a subsequence if necessary, we may assume that $ \{ \Phi_j \} $ converges locally uniformly on $ D $ to a mapping $ \Phi : D \rightarrow \overline{M} $. Note that
\[
\Phi(p_0) = \lim_{j \rightarrow \infty} \Phi_j ( \psi_j(z_0) ) = \lim_{j \rightarrow \infty} \phi_j ( \psi_j(z_0) ) = z_0 \in M. 
\]
On the other hand, 
\[
\Phi(p_0) = \lim_{j\rightarrow \infty} \Phi_j(p_0) = \lim_{j\rightarrow \infty} q_j,
\]
where $ q_j = \Phi_j(p_0) \in \partial M_j $. 

\medskip

To conclude the proof, it suffices to show that the sequence $ \{q_j\} $ cannot be relatively compact in $ M $. Suppose, to the contrary, that $ \{q_j\} $ is relatively compact in $ M $. Since $ M = \cup M_j $, there exists $ k_0 \ge 1 $ such that $ q_j \in M_{k_0} $ for all $ j$ large. As the domains $ \{M_j \} $ are nested, we have $ M_{k_0} \subset M_j $ whenever $ j \geq k_0 $. Consequently, $ q_j \in M_j $ for each $ j \geq k_0 $, contradicting the fact that $ q_j \in \partial M_j $. This contradiction shows that $ \{ \psi_j(z_0) \} $ cannot have an accumulation point in $ A $, as claimed.

\medskip

Consequently, the sequence $ \{ \psi_j(z_0) \} $ can accumulate only on $ \partial D $ or at points of $ D \setminus A $. From this point onwards, the proof proceeds exactly as in Theorem \ref{T2}(i) yielding that $ M $ is $K$-quasiconformally equivalent to the unit ball in $ \mathbb{R}^n $ in the former case and to $ \Omega $ in the latter. 
\end{proof}

\end{document}